\renewcommand{\top}{\text{T}}
\def\vs{\vspace{0.2cm}}
\def\P{\mathcal{P}}
\def\I{\mathcal{I}}
\DeclareMathOperator*{\GL}{\text{GL}}
\newtheorem{proposition}{\bf Proposition}[section]
\newenvironment{proof}{{\noindent \bf \em Proof:}}{\hfill$\square$}
\title{Coordinate-adaptive integration of PDEs on tensor manifolds}
\begin{document}
\begin{frontmatter}

\author[lbnl]{Alec Dektor}
\author[ucsc]{Daniele Venturi\corref{correspondingAuthor}}
\ead{venturi@ucsc.edu}

\address[lbnl]{Lawrence Berkeley National Laboratory, Berkeley (CA) 94720, USA.}
\address[ucsc]{Department of Applied Mathematics, University of California Santa Cruz,  Santa Cruz (CA) 95064, USA.}

\cortext[correspondingAuthor]{Corresponding author}

\journal{ArXiv}

\begin{abstract}

We introduce a new tensor integration method for 
time-dependent PDEs that controls the tensor rank 
of the PDE solution via time-dependent 
diffeomorphic coordinate transformations.
Such coordinate transformations are generated by minimizing 
the normal component of the PDE operator relative to the 
tensor manifold that approximates the PDE solution 
via a convex functional. 
The proposed method significantly improves 
upon and may be used in conjunction with the 
coordinate-adaptive algorithm we recently proposed 
in \cite{Coordinate_flows}, which is based 
on non-convex relaxations of the rank minimization
problem and Riemannian optimization.  
Numerical applications demonstrating the effectiveness of
the proposed coordinate-adaptive tensor integration method 
are presented and discussed for prototype Liouville and 
Fokker-Planck equations.
\end{abstract}
\end{frontmatter}

\section{Introduction}
Developing efficient numerical methods 
to solve high-dimensional partial differential equations 
(PDEs) is a central task in many areas of 
engineering, physical sciences and mathematics. 
Such PDEs are often written in the form of an abstract initial/boundary value problem 
\begin{equation}
\label{nonlinear_IBVP}
\begin{cases}
\displaystyle \frac{\partial u(\bm x,t)}{\partial t} = 
G_{\bm x}(u(\bm x,t))  , \qquad t \in [0,T], \\
u(\bm x,0) = u_0(\bm x),
\end{cases}
\end{equation} 
which governs the time evolution of a quantity of 
interest $u(\bm x,t)$ (high-dimensional field) over 
a compact domain $\Omega \subseteq \mathbb{R}^d$ 
($d \gg 1$) and has temporal 
dynamics generated by the nonlinear 
operator $G_{\bm x}$. The subscript ``$\bm x$'' in 
$G_{\bm x}$ indicates that the operator can 
explicitly depend on the variables $\bm x\in \Omega$. 
For instance, 
\begin{equation}
G_{\bm x}(u) =  \bm f(\bm x) \cdot \nabla  u  + 
\nabla\cdot ( \sigma(\bm x) \nabla u) +R(u), 
\end{equation}
where $R(u)$ is a nonlinear reaction term.  
The PDE \eqref{nonlinear_IBVP} 
may involve tens, hundreds, or thousands of 
independent variables, and arises naturally 
in a variety of applications of kinetic theory, e.g., the 
Liouville equation, the Fokker-Planck equation, 
the Bogoliubov-Born-Green-Kirkwood-Yvon (BBGKY) PDF hierarchy \cite{Montgomery1976,Baalrud2019,VenturiJCPclusures2018,cercignani1988} 
or the Lundgren-Monin-Novikov (LMN) PDF hierarchy 
\cite{Lundgren,Friedrich,Hosokawa,venturi2018numerical}. 
These equations allow us to perform uncertainty quantification 
in many different physical systems including non-neutral plasmas, 
turbulent combustion, hypersonic flows, and stochastic particle 
dynamics.

Several general-purpose algorithms, e.g., based on tensor 
networks \cite{tensor_high_dim_pde,rodgers2020step-truncation,Heyrim2017,adaptive_rank,HeyrimJCP_2014,Dektor_dyn_approx,Coordinate_flows} and physics-informed machine learning \cite{Raissi,Raissi1,GK2021}, have recently been proposed to integrate the PDE \eqref{nonlinear_IBVP}. 
Tensor networks can be seen as factorizations of entangled 
objects such as multivariate functions or operators into 
networks of simpler objects which are amenable to efficient 
representation and computation. 
The vast majority of tensor algorithms currently available 
to approximate functions, operators and PDEs on tensor manifolds 
rely on canonical polyadic (CP) decomposition 
\cite{Beylkin2002,parr_tensor,BoltzmannBGK2020,Heyrim2017}, 
Tucker tensors, or tensors corresponding to binary trees such as 
functional tensor train (FTT) \cite{Bigoni_2016,Dektor_dyn_approx,OseledetsTT}
and hierarchical Tucker tensors 
\cite{Etter,Grasedyck2018,approx_rates,h_tucker_geom}. 
A compelling reason for using binary tensor trees is that they allow 
the tensor expansion to be constructed via spectral theory 
for linear operators, in particular the hierarchical 
Schmidt decomposition \cite{adaptive_rank,Dektor_dyn_approx,Griebel2019}.

Regardless of the chosen tensor format, 
the efficiency of tensor algorithms for high-dimensional PDEs 
depends heavily on the rank of the solution and the PDE operator 
in the chosen format. 
Indeed, the computational cost of tensor approximation 
algorithms scales linearly with the dimension of the system, 
e.g., the number of independent variables $d$ in the PDE 
\eqref{nonlinear_IBVP}, and polynomially with the rank. 
To address the rank-related unfavorable scaling,
we recently developed a new tensor rank reduction algorithm 
based on coordinate transformations that can significantly 
increase the efficiency of high-dimensional tensor approximation 
algorithms \cite{Coordinate_flows}. 
Given a multivariate function or operator, the algorithm determines a coordinate transformation so that the function, the operator, or the operator applied to the function in the new coordinate system has smaller tensor rank. 
In \cite{Coordinate_flows} we considered linear 
coordinate transformations, which yield a new class 
of functions that we called {tensor ridge functions}. 
Tensor ridge functions can be written analytically as
\begin{equation}
v_{\bm s}(\bm x,t) = u_{\bm r}(\bm A\bm x,t),
\label{A}
\end{equation}
where $u_{\bm r}$ is a given FTT tensor, and $v_{\bm s}$ 
is a FTT tensor with smaller rank, i.e., $|\bm s|\leq |\bm r|$. 
To compute the unknown matrix $\bm A$ (which enables tensor 
rank reduction), we developed a Riemannian gradient 
descent algorithm on some matrix manifold (e.g., $\bm A \in \text{SL}_d(\mathbb{R})$) that minimizes the {\em non-convex} cost functional 
\begin{equation}
C(\bm A)=S\left[u_{\bm r}(\bm A\bm x,t)\right],
\label{Co}
\end{equation}
where $S$ is the Schatten 1-norm. 
The numerical results we obtained in the recent paper 
\cite{Coordinate_flows} suggest that rank reduction 
based on linear coordinate transformations 
can significantly speed up tensor computations 
for PDEs while retaining accuracy.

In this paper we propose a new rank-reduction method 
for \eqref{nonlinear_IBVP} based on coordinate transformations 
which does not rely on non-convex relaxations of 
the rank minimization problem, e.g., 
the minimization of \eqref{Co}. 
The main idea is to select an infinitesimal generator 
of the coordinate transformation at each time 
which minimizes the component of the PDE operator (expressed 
in intrinsic coordinates) that is normal to the 
tensor manifold where we approximate the PDE solution.
This generates a time-dependent nonlinear coordinate 
transformation (referred to as a coordinate flow) directly from the PDE via a convex optimization problem. 
The Euler-Lagrange equations arising from this time-dependent variational principle (TDVP) allow us to determine an optimal coordinate flow that controls/minimizes the tensor rank of the solution during time integration. 

This paper is organized as follows. 
In Section \ref{sec:integration_on_flow} we develop the theory for coordinate-adaptive tensor integration of PDEs via rank-reducing coordinate flows. To this end, we begin with general nonlinear coordinate flows (diffeomorphisms) and formulate a new convex functional that links the PDE generator to the geometry of the tensor manifold and the flow (Section \ref{sec:integration_on_mflds}). 
We then restrict our attention to linear coordinate flows, i.e., ridge tensors, and develop the corresponding coordinate-adaptive tensor integration scheme. 
In Section \ref{sec:numerics} we demonstrate the 
effectiveness of the new tensor integrators by applying 
them to prototype Liouville and Fokker-Planck equations. 
Our findings are summarized in Section \ref{sec:conclusions}.

%-----------------------------------------------------%
%\section{Time integration along coordinate flows}
\section{Coordinate-adaptive time integration of PDEs on tensor manifolds}
\label{sec:integration_on_flow}
We begin by representing the solution 
$u(\bm x,t)$ to the PDE \eqref{nonlinear_IBVP} 
with another function 
\begin{equation}
\label{coordinate_flow_function}
v(\bm y(\bm x,t),t) = u(\bm x,t),
\end{equation} 
defined on a time-dependent curvilinear coordinate system $\bm y(\bm x, t)$ \cite{Aris,Serrin,VenturiConvective} with $\bm y(\bm x, 0)=\bm x$. 
It is assumed that $\bm y(\bm x,t)$ is a diffeomorphism\footnote{Recall that if $\bm y(\bm x,t)$ is a diffeomorphism then there exists a unique differentiable inverse flow $\bm x(\bm y,t)$ such that $\bm x(\bm y(\bm x,t),t)=\bm x$ for all  $t \in [0,T]$  ($T<\infty$).} which we will refer to as {\em coordinate flow}.
To simplify notation we may not explicitly write 
the dependence of $\bm y$ on $\bm x$ 
(and vice-versa the dependence of $\bm x$ on $\bm y$), or 
the dependence of $\bm x$, $\bm y$ on $t$. 
However, it is always assumed that 
$\bm x$ and $\bm y$ are related 
via a (time-dependent) diffeomorphism. 
%
%We also denote the derivative of an object with respect to time 
%with a ``dot'' above the object. 
%
%Representing a function on a 
%transformed coordinate system 
%has proven to be a successful 
%technique for a wide range of 
%applications  \cite{Coordinate_flows,GKSS_2005,Haoxiang,Conjugateflows}. 
%
Next, we define two distinct 
time derivatives of $v(\bm y(\bm x,t),t)$. 
The first represents a change in $v$ at fixed 
location $\bm y$, i.e., a derivative of $v$ with 
respect to time with $\bm y$ constant, 
which we denote by $\partial v(\bm y(\bm x,t),t)/\partial t$. 
The second represents a change in $v$ along 
the coordinate flow $\bm y$, i.e., a derivative 
of $v$ with respect to time with $\bm x$ constant, which 
we denote by $D v(\bm y(\bm x,t),t)/Dt$. The derivative 
$D v(\bm y(\bm x,t),t)/Dt$ is known as material 
derivative \cite{Aris,Haoxiang}, or convective derivative \cite{VenturiConvective,Conjugateflows}. 
Of course these time derivatives are related 
via the equation 
\begin{equation}
\label{total_derivative}
\frac{D v(\bm y(\bm x,t),t)}{D t} = \frac{\partial v(\bm y(\bm x,t),t)}{\partial t} + \frac{\partial \bm y(\bm x,t)}{\partial t} \cdot \nabla_{\bm y} v(\bm y(\bm x,t),t). 
\end{equation}
Since $u(\bm x,t)$ is a solution to \eqref{nonlinear_IBVP}, 
$v$ is related to $u$ by \eqref{coordinate_flow_function} and 
$\bm y(\bm x,0) = \bm x$, it follows immediately that $v$ satisfies the PDE
\begin{equation}
\label{nonlinear_IBVP_y}
\begin{cases}
\displaystyle \frac{D v(\bm y,t)}{D t} = 
G_{\bm y}(v(\bm y,t)) , \qquad t \in [0,T], \\
v(\bm y,0) = u_0(\bm y),
\end{cases}
\end{equation} 
where the operator $G_{\bm y}$ can be derived by 
writing $G_{\bm x}$ in coordinates $\bm y$ 
using standard tools of differential geometry 
\cite{Aris,VenturiConvective,Conjugateflows}. 
Combining \eqref{total_derivative} and \eqref{nonlinear_IBVP_y} 
we obtain\footnote{The 
PDE \eqref{total_derivative_PDE} represents the evolution 
equation for the solution to \eqref{nonlinear_IBVP_y} written along the coordinate flow $\bm y(\bm x,t)$.} 
\begin{equation}
\label{total_derivative_PDE}
\begin{cases}
\displaystyle \frac{\partial v(\bm y,t)}{\partial t} = Q_{\bm y}(v(\bm y,t),\dot{\bm y}), \qquad t \in [0,T], \\
v(\bm y,0) = u_0(\bm y), 
\end{cases}
\end{equation} 
where  
\begin{equation}
Q_{\bm y}(v(\bm y,t),\dot{\bm y})=G_{\bm y}(v(\bm y,t)) - \dot{\bm y}(\bm x,t) \cdot \nabla_{\bm y} v(\bm y,t) 
\label{Qdef}
\end{equation}
and
$\dot{\bm y}(\bm x,t)=\partial \bm y(\bm x,t)/\partial t$. 
Of course $\dot{\bm y}(\bm x,t)$ can be expressed in coordinates $\bm y$ via the inverse map $\bm x(\bm y,t)$. Moreover, the PDE domain $\Omega$ is mapped into 
\begin{equation}
\Omega_y(t) =\left\{\bm z\in \mathbb{R}^d: \bm z=\bm y(\bm x,t), \quad \bm x\in \Omega\right\}.
\end{equation}
by the coordinate flow $\bm y(\bm x,t)$.

\subsection{Dynamic tensor approximation}
\label{sec:integration_on_mflds}
Denote by $\mathcal{M}_{\bm r}(t) \subseteq \mathcal{H}(\Omega_y(t))$ a tensor manifold embedded in the Hilbert space 
\begin{equation}
\mathcal{H}(\Omega_y(t))=L^2(\Omega_y(t))
\label{Hilbert}
\end{equation}
of functions defined on the time-dependent domain $\Omega_y(t)\subseteq \mathbb{R}^d$. 
Such manifolds include hierarchical tensor formats such as the hierarchical Tucker and the tensor train formats \cite{geometric_structure_tensors,approx_rates}. 
We approximate the solution $v(\bm y,t)$ to the PDE \eqref{total_derivative_PDE} with an element $v_{\bm r}(\bm y,t)$ 
belonging to $\mathcal{M}_{\bm r}(t)$. 
We allow the tensor manifold to depend on $t$ so that the solution rank can be chosen adaptively during time integration to ensure an accurate approximation. 
At each point $v_{\bm r} \in \mathcal{M}_{\bm r}$ the function
space $\mathcal{H}(\Omega_y)$ can be partitioned as a 
direct sum of two vector subspaces  \cite{adaptive_rank}
\begin{equation}
\label{tangent_normal_partition}
    \mathcal{H}(\Omega_y) = T_{v_{\bm r}} \mathcal{M}_{\bm r} \oplus N_{v_{\bm r}} \mathcal{M}_{\bm r},
\end{equation}
where $T_{v_{\bm r}} \mathcal{M}_{\bm r}$ (resp. $N_{v_{\bm r}} 
\mathcal{M}_{\bm r}$) denotes the 
tangent space (resp. normal space) relative to the 
manifold $\mathcal{M}_{\bm r}$ at the point $v_{\bm r}$. 
Assuming the solution to \eqref{total_derivative_PDE} 
at time $t$ can be represented by an element of the tensor 
manifold $\mathcal{M}_{\bm r}(t)$, we utilize the direct 
sum \eqref{tangent_normal_partition} to decompose the 
right hand side of the PDE \eqref{total_derivative_PDE} 
into a tangential component and a normal 
component relative to $\mathcal{M}_{\bm r}(t)$ 
\begin{equation}
\begin{aligned}
\label{tangent_plus_normal}
   Q_{\bm y}(v_{\bm r},\dot{\bm y}) 
   = \P_{T}({v}_{\bm r})Q_{\bm y}(v_{\bm r},\dot{\bm y}) + \P_{N}({v}_{\bm r})Q_{\bm y}(v_{\bm r},\dot{\bm y}),\qquad v_{\bm r}\in \mathcal{M}_{\bm r}(t), 
\end{aligned}
\end{equation}
where $\P_{T}({v}_{\bm r})$ and $\P_{N}({v}_{\bm r})=\I-\P_{T}({v}_{\bm r})$ denote, respectively, the orthogonal projections 
onto the tangent and normal space of $\mathcal{M}_{\bm r}(t)$ at the point $v_{\bm r}$ (see Figure \ref{fig:TDVP}).
Projecting the initial condition in \eqref{total_derivative_PDE} 
onto $\mathcal{M}_{\bm r}(0)$ with a truncation operator 
$\mathfrak{T}_{\bm r}(\cdot)$ and 
using only the tangential component of the 
PDE dynamics for all $t \in [0,T]$ 
we obtain the projected PDE 
\begin{equation}
\label{nonlinear_IBVP_proj}
\begin{cases}
\displaystyle \frac{\partial v_{\bm r}(\bm y,t)}{\partial t} = 
\P_{T}(v_{\bm r}) Q_{\bm y}(v_{\bm r}(\bm y,t),\dot{\bm y}), \qquad t \in [0,T], \\
v_{\bm r}(\bm y,0) = \mathfrak{T}_{\bm r} (u_0(\bm y)),
\end{cases}
\end{equation} 
with solution $v_{\bm r}(\bm y,t)$ which remains on 
the tensor manifold $\mathcal{M}_{\bm r}(t)$ for 
all $t \in [0,T]$. 
We have previously shown \cite{adaptive_rank} that 
the approximation error 
\begin{equation}
\label{dyn_approx_error}
\epsilon(t)= \left\|v(\bm y,t) - v_{\bm r}(\bm y,t)\right\|_{\mathcal{H}(\Omega_y(t))}
\end{equation}
can be controlled by selecting the rank $\bm r(t)$ at each $t$ 
so that the norm of the normal component in \eqref{tangent_plus_normal} 
remains bounded by some user-defined threshold $\varepsilon$, i.e., 
\begin{equation}
\label{normal_bound}
\left\| \P_{N}(v_{\bm r})Q_{\bm y}(v_{\bm r}(\bm y,t),\dot{\bm y}) \right\|_{\mathcal{H}(\Omega_y(t))} < \varepsilon. 
\end{equation}
Therefore the normal component determines the increase in solution rank required to maintain an accurate approximation during time integration. 
On the other hand, rank decrease during time integration can be 
interpreted as the tangential component of the PDE operator 
guiding the solution to a region of the manifold 
$\mathcal{M}_{\bm r}(t)$ with higher curvature. In fact, 
recall that the smallest singular value of $v_{\bm r}$ is inversely 
proportional to the curvature of the manifold 
$\mathcal{M}_{\bm r}(t)$ at the point $v_{\bm r}$ 
\cite{Lubich_2016}. 

%-----------------------------------------------------%
\subsection{Variational principle for rank-reducing coordinate flows}
\label{sec:TDVP}
\begin{figure}[!t]
\centering
\includegraphics[scale=0.5]{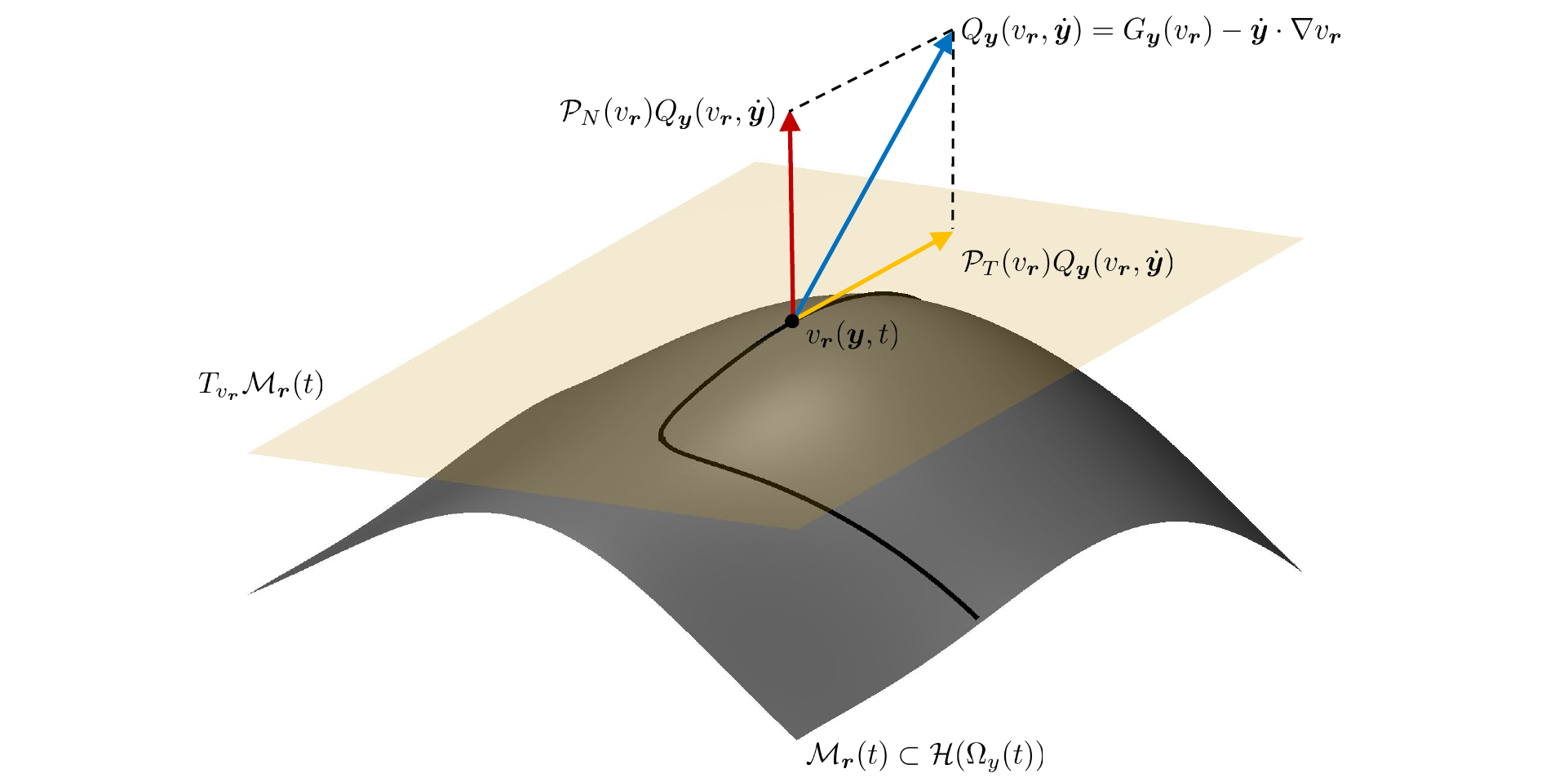} 
\caption{Sketch of the tensor manifold $\mathcal{M}_{\bm r}(t)\subset \mathcal{H}(\Omega_y(t))$. Shown are the normal and tangent components of $Q_{\bm y} (v_{\bm r},\dot {\bm y})$ (PDE operator in equation \eqref{Qdef}) at $v_{\bm r}\in \mathcal{M}_{\bm r}(t)$. The manifold $\mathcal{M}_{\bm r}(t)$ has multilinear rank that depends on the coordinate flow $\bm y(\bm x,t)$ (see \cite{Coordinate_flows}). The variational principle  \eqref{TDVP} minimizes the component of $Q_{\bm y}$ normal to the tensor manifold $\mathcal{M}_{\bm r}(t)$ at each time, i.e., the component of $Q_{\bm y}$ that is responsible for the rank increase, relative to arbitrary variations of the coordinate flow generator. This mitigates the tensor rank increase when integrating \eqref{nonlinear_IBVP_proj} using the rank-adaptive tensor methods we recently developed in \cite{adaptive_rank,rodgers2020step-truncation}.} 
\label{fig:TDVP}
\end{figure}
Leveraging the geometric structure of the tensor manifold 
$\mathcal{M}_{\bm r}(t)$ described in Section 
\ref{sec:integration_on_mflds}, we aim at generating 
a coordinate flow $\bm y(\bm x,t)$  
that minimizes the rank of the PDE solution 
$v_{\bm r}(\bm y,t)$ to \eqref{nonlinear_IBVP_proj} 
while maintaining an accurate approximation to 
\eqref{nonlinear_IBVP_y}. 
Since the approximation error \eqref{dyn_approx_error} 
is controlled by the normal component of the PDE 
operator we select an infinitesimal generator 
for the coordinate flow $\dot{\bm y}$ that minimizes 
such normal component, 
i.e., that minimizes the left hand side of \eqref{normal_bound}. 
%
%We also construct the coordinate flow 
%to either amplify or dampen any effects of the 
%tangential component in \eqref{tangent_plus_normal}. 
%
%$\P_{T}({v}_{\bm r}) \left[ G_{\bm y}(v_{\bm r}(\bm y,t),\bm y) + {\partial \bm %y(\bm x,t)}/{\partial t} \cdot \nabla_{\bm y} v_{\bm r}(\bm y(\bm x,t),t) %\right]$. 
%
%The choice between amplifying or dampening depends on whether %the tangent vector points to a region of the tensor manifold %with larger curvature 
%(i.e., decreases solution rank) or smaller 
%curvature (i.e., increases solution rank). 
%
%In case of the former we amplify the tangential component and 
%in case of the latter we dampen it. 
%
%
This idea results in a 
{\em convex optimization problem} over the 
Lie algebra $\mathfrak{g}$ of infinitesimal coordinate flow
generators $\dot{\bm y}$ at time $t$ 
\begin{align}
\min_{\dot{\bm y} \in \mathfrak{g}}\left\| \P_{N}(v_{\bm r})Q_{\bm y}(v_{\bm r},\dot{\bm y}) \right\|_{\mathcal{H}(\Omega_y(t))}^2
= 
\min_{\dot{\bm y} \in \mathfrak{g}}\left\| \P_{N}(v_{\bm r})\left[ G_{\bm y}(v_{\bm r}) - \dot{\bm y}(\bm x,t) \cdot \nabla_{\bm y} v_{\bm r}  \right] \right\|_{\mathcal{H}(\Omega_y(t))}^2 .
\label{TDVP}
\end{align}

\noindent
Note that this functional is not optimizing the projection of $\dot{\bm y}\cdot \nabla v_{\bm r}$ onto the tangent space of the tensor manifold at $v_{\bm r}$. In principle, it is possible to control such projection, and eventually have the tangential component of $Q_{\bm y}$ pointing towards a region of the tensor manifold with higher curvature. This results in smaller singular values of the tensor decomposition of the solution as time increases\footnote{Recall that the curvature of a tensor manifold is inversely proportional to the inverse of the singular values \cite{Lubich_2016}.}, which may induce 
further tensor rank reduction via thresholded tensor truncation.

%
%The first term in \eqref{TDVP} minimizes the effect of the normal component of the PDE dynamics while the second term is responsible for either amplifying ($\beta < 0$) or dampening the effect ($\beta > 0$) of the tangent space dynamics. 
%
%The parameter $\beta$ can be chosen in a number of ways, e.g., by using the history of singular values or the rank of the solution tensor. 
%
%The general idea is that if the tangent space dynamics are reducing the tensor rank, then $\beta$ should be negative so that the rank decrease is amplified. 
%
%Otherwise $\beta$ should be positive so that the dynamics of the tangent space are dampened. 
%
%The parameter $\lambda$ is a weight determining how much the tangent space dynamics contribute to the coordinate flow. 

\begin{proposition}
Let $\bm f$ be a solution of the convex optimization 
problem \eqref{TDVP}. Then $\bm f$ satisfies the linear system of equations 
\begin{equation}
\label{linear_system_for_f}
 \P_N(v_{\bm r}) \left[ \bm f \cdot \nabla v_{\bm r} \right] \nabla v_{\bm r} = \P_N(v_{\bm r})\left[ G_{\bm y}(v_{\bm r}) \right] \nabla v_{\bm r} . 
\end{equation}
\end{proposition}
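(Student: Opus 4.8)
The plan is to treat \eqref{TDVP} as an unconstrained quadratic minimization over the space $\mathfrak g$ of infinitesimal generators (vector fields on $\Omega_y(t)$) at the fixed time $t$, and to derive its Euler--Lagrange equation. First I would note that $\dot{\bm y}\mapsto \dot{\bm y}\cdot\nabla_{\bm y} v_{\bm r}$ is linear, so $\dot{\bm y}\mapsto \P_N(v_{\bm r})Q_{\bm y}(v_{\bm r},\dot{\bm y})=\P_N(v_{\bm r})[G_{\bm y}(v_{\bm r})]-\P_N(v_{\bm r})[\dot{\bm y}\cdot\nabla_{\bm y} v_{\bm r}]$ is affine and the objective $J(\dot{\bm y})$ is the square of its $\mathcal H(\Omega_y(t))$-norm. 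Hence $J$ is convex and a generator $\bm f$ is a global minimizer if and only if its first variation vanishes.

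Second, I would compute the Gâteaux derivative of
\[
J(\dot{\bm y}) \;=\; \left\langle \P_N(v_{\bm r})Q_{\bm y}(v_{\bm r},\dot{\bm y}),\; \P_N(v_{\bm r})Q_{\bm y}(v_{\bm r},\dot{\bm y}) \right\rangle_{\mathcal H(\Omega_y(t))}
\]
at $\bm f$ along an arbitrary $\bm g\in\mathfrak g$. Since $Q_{\bm y}(v_{\bm r},\bm f+\varepsilon\bm g)=Q_{\bm y}(v_{\bm r},\bm f)-\varepsilon\,\bm g\cdot\nabla_{\bm y} v_{\bm r}$, expanding $J(\bm f+\varepsilon\bm g)$ gives an $\varepsilon$-linear term equal to $-2\left\langle \P_N(v_{\bm r})[\bm g\cdot\nabla_{\bm y} v_{\bm r}],\, \P_N(v_{\bm r})Q_{\bm y}(v_{\bm r},\bm f)\right\rangle$. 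Using that $\P_N(v_{\bm r})$ is an orthogonal projection ($\P_N^2=\P_N$, $\P_N^{*}=\P_N$), this reduces to $-2\left\langle \bm g\cdot\nabla_{\bm y} v_{\bm r},\, \P_N(v_{\bm r})Q_{\bm y}(v_{\bm r},\bm f)\right\rangle$. Setting it to zero yields the weak form
\[
\left\langle \bm g\cdot\nabla_{\bm y} v_{\bm r},\; \P_N(v_{\bm r})\!\left[G_{\bm y}(v_{\bm r})-\bm f\cdot\nabla_{\bm y} v_{\bm r}\right] \right\rangle_{\mathcal H(\Omega_y(t))} \;=\; 0 \qquad \text{for all } \bm g\in\mathfrak g .
\]

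Third, writing the inner product as $\int_{\Omega_y(t)}\sum_{i=1}^d g_i(\bm y)\,\partial_{y_i}v_{\bm r}(\bm y)\,\big(\P_N(v_{\bm r})[G_{\bm y}(v_{\bm r})-\bm f\cdot\nabla_{\bm y} v_{\bm r}]\big)(\bm y)\,d\bm y$ and letting each component $g_i$ range freely, the fundamental lemma of the calculus of variations forces $\partial_{y_i}v_{\bm r}\,\big(\P_N(v_{\bm r})[G_{\bm y}(v_{\bm r})-\bm f\cdot\nabla_{\bm y} v_{\bm r}]\big)=0$ a.e. for every $i$, i.e. $\P_N(v_{\bm r})[G_{\bm y}(v_{\bm r})-\bm f\cdot\nabla_{\bm y} v_{\bm r}]\,\nabla_{\bm y} v_{\bm r}=0$. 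Splitting the bracket by linearity of $\P_N(v_{\bm r})$ and rearranging gives \eqref{linear_system_for_f}; convexity makes this condition also sufficient.

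The step I expect to be the main obstacle is the passage from the weak identity to the pointwise linear system: it requires $\mathfrak g$ to be rich enough — e.g. to contain all smooth compactly supported vector fields on $\Omega_y(t)$, as for the Lie algebra of the full diffeomorphism group — together with enough integrability of $G_{\bm y}(v_{\bm r})$ and $\nabla_{\bm y} v_{\bm r}$ for the fundamental lemma to apply. If instead $\mathfrak g$ is constrained (for instance divergence-free generators), one recovers \eqref{linear_system_for_f} only modulo a gradient-type Lagrange multiplier. A secondary technical point is verifying that $J$ is finite and Gâteaux-differentiable on $\mathfrak g$, which follows from $G_{\bm y}(v_{\bm r}),\ \partial_{y_i}v_{\bm r}\in L^2(\Omega_y(t))$ and boundedness of $\P_N(v_{\bm r})$.
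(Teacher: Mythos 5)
Your proposal is correct and follows essentially the same route as the paper: both exploit convexity to reduce the problem to finding a critical point, compute the first variation of the squared norm, use the self-adjointness and idempotence of $\P_N(v_{\bm r})$ to move the projector off the perturbation term, and then pass from the vanishing of the variation for arbitrary perturbations to the pointwise Euler--Lagrange system \eqref{E-L-scalar}. Your explicit invocation of the fundamental lemma of the calculus of variations (and the caveat about $\mathfrak g$ needing to be rich enough) only makes precise a step the paper performs implicitly.
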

\begin{proof}
Due to the convexity of \eqref{TDVP} 
any critical point of the cost functional 
\begin{align}
C(\dot{\bm y})= \left\| \P_{N}(v_{\bm r})\left[ G_{\bm y}(v_{\bm r}) - \dot{\bm y}(\bm x,t) \cdot \nabla_{\bm y} v_{\bm r} \right] \right\|_{\mathcal{H}(\Omega_y(t))}^2
\label{TDVP_cost_fun}
\end{align}
in the Lie algebra $\mathfrak{g}$ is necessarily a global 
solution to the optimization problem \eqref{TDVP}. 
The first variation of $C$ with respect to 
$\dot{y}_i$ is easily obtained as 
\begin{equation}
\begin{aligned}
\delta_{\dot{y}_i} C \eta_i 
&= 2 \int_{\Omega_y(t)} \P_{N}(v_{\bm r}) \left[ G_{\bm y}(v_{\bm r}) - \dot{\bm y} \cdot \nabla v_{\bm r} \right]  \P_{N}(v_{\bm r}) \left[ - \frac{\partial v_{\bm r}}{\partial y_i} \eta_i  \right] d\bm y \\
&= 2 \int_{\Omega_y(t)} \P_{N}(v_{\bm r}) \left[ G_{\bm y}(v_{\bm r}) - \dot{\bm y} \cdot \nabla v_{\bm r} \right]
\left( - \frac{\partial v_{\bm r}}{\partial y_i} \eta_i \right) d\bm y, 
\end{aligned}
\label{TTv}
\end{equation}
where $\eta_i \in \mathfrak{g}$ is an arbitrary 
perturbation. 
To obtain the second equality in \eqref{TTv} we used the fact 
that the orthogonal projection 
$\P_N(v_{\bm r})$ is symmetric with respect to 
the inner product in $L^2(\Omega_y(t))$ and idempotent. 
Setting \eqref{TTv} equal to zero for arbitrary $\eta_i$ yields the Euler-Lagrange equations 
\begin{equation}
\label{E-L-scalar}
\P_{N}(v_{\bm r}) \left[ \dot{\bm y} \cdot \nabla v_{\bm r} \right] \frac{\partial v_{\bm r}}{\partial y_i} = 
\P_{N}(v_{\bm r}) \left[ G_{\bm y}(v_{\bm r})\right]  \frac{\partial v_{\bm r}}{\partial y_i} 
, \qquad i = 1,2,\ldots, d. 
\end{equation}
Writing the preceding system of equations in vector notation 
proves the Proposition. 

\end{proof}

\vs
\noindent
{
Note that if $\nabla v_{\bm r}$ is non-zero then 
from the optimality conditions we obtain 
$\P_N(v_{\bm r}) \left[ \bm f \cdot \nabla v_{\bm r}\right]   = \P_N(v_{\bm r}) G_{\bm y}(v_{\bm r})$. 
In this case the normal component of $v_{\bm r}(\bm y,t)$, i.e., the right hand side of \eqref{normal_bound}, is zero along the 
optimal coordinate flow and therefore the rank of $v_{\bm r}$ never increases during time integration. 
}

With the infinitesimal generator $\bm f(\bm y,t)$ 
solving the convex optimization problem \eqref{TDVP} 
available (i.e., the generator satisfying the linear 
system \eqref{linear_system_for_f}), we can 
now compute a coordinate transformation $\bm y(\bm x,t)$ via 
the dynamical system 
%
% We now generate a coordinate flow $\bm y(\bm x,t)$ 
% from the time-dependent infinitesimal 
% generators $\bm f(\bm y,t)$ 
% obtained as a solution to the optimization 
% problem \eqref{TDVP} 
% (i.e., $\bm f(\bm x,t)$ satisfying the 
% linear system \eqref{linear_system_for_f}) 
% via the dynamical system 
%
\begin{equation}
\label{coordinate_flow_DE}
\begin{cases}
\displaystyle \frac{\partial \bm y}{\partial t} = 
\bm f(\bm y,t) , \\
\bm y(\bm x,0) = \bm x. 
\end{cases}
\end{equation}
Such coordinate transformation allows us to 
solve the PDE \eqref{nonlinear_IBVP_proj} along 
a coordinate flow that minimizes the projection 
of $Q_{\bm y}$ onto the normal component of the tensor 
manifold in which the solution lives, henceforth 
minimizing the rank increase of $v_{\bm r}$ during time 
integration. 
The corresponding evolution equation 
along the rank reducing coordinate flow is obtained 
by coupling the ODE \eqref{coordinate_flow_DE} with 
\eqref{nonlinear_IBVP_proj} as
\begin{equation}
\label{coordinate_flow_PDE2}
\begin{cases}
\displaystyle\frac{\partial v_{\bm r}(\bm y,t)}{\partial t} = \P_T(v_{\bm r})\left[ G_{\bm y}(v_{\bm r}(\bm y,t)) - \bm f(\bm y,t) \cdot \nabla_{\bm y} v_{\bm r}(\bm y,t) \right], \\
v_{\bm r}(\bm y(\bm x,0),0) = \mathfrak{T}_{\bm r}(u_0(\bm y)). 
\end{cases}
\end{equation}
The coupled system of equations \eqref{TDVP}, 
\eqref{coordinate_flow_DE}, and \eqref{coordinate_flow_PDE2}
allows us to devise a time integration 
scheme for the PDE \eqref{nonlinear_IBVP} that leverages diffeomorphic coordinate flows to control the component of $Q_{\bm y}$ that is normal to tensor manifold $\mathcal{M}_{\bm r}$, and therefore control the rank increase of the PDE solution $v_{\bm r}$ in time.

To describe the scheme in more detail 
let us discretize the temporal domain $[0,T]$ into 
$N+1$ evenly spaced time instants 
\begin{equation}
\label{time_discretization}
t_k = k \Delta t, \qquad \Delta t = \frac{T}{N}, \qquad k = 0,1,\ldots,N. 
\end{equation}
To integrate the solution and the coordinate flow 
from time $t_k$ to time $t_{k+1}$ we first 
solve the linear system of equations \eqref{linear_system_for_f}. This gives us an optimal infinitesimal coordinate flow 
generator $\bm f(\bm y,t_k)$ at time $t_k$. 
Using the evolution equations 
\eqref{coordinate_flow_DE} and \eqref{coordinate_flow_PDE2} 
we then integrate both the coordinate flow $\bm y(\bm x,t)$ and the 
PDE solution along the coordinate flow 
$v_{\bm r}(\bm y(\bm x,t),t)$ from time 
$t_k$ to time $t_{k+1}$. 
Finally we update the PDE operator $G_{\bm y}$ 
to operate in the new coordinate system.

The proposed coordinate flow time marching scheme 
comes with two computational challenges. 
The first challenge is computing the optimal infinitesimal 
generator $\bm f(\bm y,t_k)$, i.e., solving the 
linear system \eqref{linear_system_for_f} at each time step. 
If we represent $\bm f(\bm y,t_k)$ in the span of a 
finite-dimensional basis then  
\eqref{linear_system_for_f}
yields a linear system for the coefficients 
of the expansion.
The size of such linear system 
depends on the dimension of the chosen
space. For instance,  in the case of 
linear coordinate flows we obtain a 
$d^2\times d^2$ system.
The second challenge is 
representing the PDE operator in 
coordinates $\bm y(\bm x,t)$, 
i.e., constructing $G_{\bm y}$. 
For a general nonlinear coordinate transformation 
$\bm y(\bm x,t)$ the operator $G_{\bm y}$ includes 
the metric tensor of the transformation which can 
significantly complicate the form of $G_{\bm y}$ 
(see, e.g., \cite{Aris,Haoxiang}). 
Hereafter we narrow our attention to 
linear coordinate flows and derive the corresponding 
time integration scheme which 
does not suffer from the aforementioned 
computational challenges.

\subsubsection{Linear coordinate flows}

Any coordinate flow $\bm y(\bm x,t)$ 
that is linear in $\bm x$ 
at each time $t$ can be represented by 
a time-dependent invertible 
$d \times d$ matrix $\bm \Gamma(t)$ 
\begin{equation}
\label{linear_coordinate_flow}
\bm y(\bm x,t) = \bm \Gamma(t) \bm x. 
\end{equation}
Here we consider arbitrary matrices $\bm \Gamma(t)$ belonging 
to the collection of all\footnote{It is also possible to generate linear coordinate flows using subgroups of 
$\GL_d(\mathbb{R})$ such as the subgroup of matrices with 
determinant equal to 1 (i.e. volume preserving linear maps), 
or the group of rigid rotations in $d$ dimensions.} $d \times d$ invertible matrices with real entries, denoted by $\GL_d(\mathbb{R})$. 
The corresponding collection 
of infinitesimal coordinate flow generators 
is the vector space of all $d \times d$ matrices 
with real entries.
% , i.e., $\mathfrak{g} = \M_{d \times d}(\mathbb{R})$. 
%
In this setting the time-dependent tensor approximation 
of $u(\bm x,t)$ in coordinates $\bm y$ takes the form 
\begin{equation}
\label{ridge_time_dependent}
v_{\bm r}(\bm \Gamma(t) \bm x, t) \approx u(\bm x,t).
\end{equation}
This is known as a (time-dependent) generalized {\em tensor ridge function} \cite{Coordinate_flows,Pinkus_ridge}.   
The time-dependent optimization problem \eqref{TDVP} 
that generates the optimal linear coordinate flow can be now
written as  
\begin{equation}
\label{TDVP_linear}
\min_{\dot{\bm \Gamma} \in M_{d\times d}(\mathbb{R}) }
C\left(\dot{\bm \Gamma} \bm y\right),
\end{equation}
where $C(\cdot)$ is the cost function defined in 
\eqref{TDVP_cost_fun}. 
\begin{proposition}
\label{prop:TDVP_linear}
Let  $\bm \Sigma(t)$ be a solution of the optimization problem \eqref{TDVP_linear} at time $t$. Then $\bm \Sigma(t)$ 
satisfies the symmetric $d^2 \times d^2$ linear system 
\begin{equation}
\label{TDVP_linear_system}
\bm A {\rm vec}(\bm \Sigma(t)) = \bm b ,
\end{equation}
where ${\rm vec}(\bm \Sigma(t))$ is a vectorization 
of the $d \times d$ matrix $\bm \Sigma(t)$, 
\begin{equation}
\bm A = \int_{\Omega_y(t)} \P_{N}(v_{\bm r}) \left[ {\rm vec}(\bm c(\bm y))\right] {\rm vec}(\bm c(\bm y))^{\top} d \bm y, \qquad 
\bm b = \int_{\Omega_y(t)}  \P_{N}(v_{\bm r})\left[ G_{\bm y}(v_{\bm r}(\bm y,t))\right] 
{\rm vec}(\bm c(\bm y)) d \bm y, 
\end{equation}
and $\bm c(\bm y)$ is the $d \times d$ matrix with entries 
\begin{equation}
c_{ij}(\bm y) = \frac{\partial v_{\bm r}(\bm y,t)}{\partial y_i} y_j, \qquad i,j=1,2,\ldots,d. 
\end{equation}
\end{proposition}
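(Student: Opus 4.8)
The plan is to specialize the variational computation already carried out in the general case. For a linear coordinate flow the admissible infinitesimal generators are precisely the linear vector fields $\dot{\bm y}=\bm\Sigma\bm y$ with $\bm\Sigma\in\M_{d\times d}(\mathbb{R})$, so \eqref{TDVP_linear} amounts to minimizing the cost \eqref{TDVP_cost_fun} over the finite-dimensional subspace of $\mathfrak{g}$ spanned by such fields. First I would record the elementary identity
\[
(\bm\Sigma\bm y)\cdot\nabla_{\bm y}v_{\bm r}
=\sum_{i,j}\Sigma_{ij}\,y_j\,\frac{\partial v_{\bm r}}{\partial y_i}
=\sum_{i,j}\Sigma_{ij}\,c_{ij}(\bm y)
={\rm vec}(\bm\Sigma)^{\top}{\rm vec}(\bm c(\bm y)),
\]
with $c_{ij}(\bm y)=(\partial v_{\bm r}/\partial y_i)\,y_j$ as in the statement, so that $C$ becomes an explicit quadratic function of the vector ${\rm vec}(\bm\Sigma)\in\mathbb{R}^{d^2}$.

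Next I would repeat the first-variation computation \eqref{TTv}--\eqref{E-L-scalar}, but now perturbing inside the admissible class, i.e.\ $\dot{\bm y}\mapsto\bm\Sigma\bm y+\varepsilon\,\bm E\bm y$ for an arbitrary constant matrix $\bm E$, so that the perturbation contributes $(\bm E\bm y)\cdot\nabla_{\bm y}v_{\bm r}={\rm vec}(\bm E)^{\top}{\rm vec}(\bm c(\bm y))$. Using the linearity of $\P_{N}(v_{\bm r})$ to pull the constant entries of $\bm E$ and $\bm\Sigma$ out of the projection, and the symmetry and idempotency of $\P_{N}(v_{\bm r})$ exactly as in the step leading to the second line of \eqref{TTv}, the stationarity condition $\delta C=0$ for all $\bm E$ reads
\[
\int_{\Omega_y(t)}{\rm vec}(\bm c(\bm y))\,\P_{N}(v_{\bm r})\Big[G_{\bm y}(v_{\bm r})-(\bm\Sigma\bm y)\cdot\nabla_{\bm y}v_{\bm r}\Big]\,d\bm y=\bm 0 .
\]
Substituting the identity of the previous paragraph for the residual and rearranging turns this into the linear system ${\bm A}\,{\rm vec}(\bm\Sigma)={\bm b}$ with ${\bm A}$ and ${\bm b}$ as in \eqref{TDVP_linear_system}; to land on precisely the displayed (asymmetric-looking) integrands one moves one factor of $\P_{N}(v_{\bm r})$ from one entry of each inner product to the other using $\langle \P_{N}f,\P_{N}g\rangle=\langle f,\P_{N}g\rangle=\langle \P_{N}f,g\rangle$.

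Finally, since $C$ is a convex (indeed quadratic, with Hessian $2{\bm A}$) functional on $\M_{d\times d}(\mathbb{R})$ — the linear-flow instance of the convexity already used for \eqref{linear_system_for_f} — every stationary point is a global minimizer, so the minimizer $\bm\Sigma(t)$ of \eqref{TDVP_linear} satisfies \eqref{TDVP_linear_system}. Symmetry (and positive semidefiniteness) of ${\bm A}$ is then read off by rewriting it, via idempotency of $\P_{N}(v_{\bm r})$, as $\int_{\Omega_y(t)}\P_{N}(v_{\bm r})[{\rm vec}(\bm c(\bm y))]\,\P_{N}(v_{\bm r})[{\rm vec}(\bm c(\bm y))]^{\top}d\bm y$. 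The only delicate part is the bookkeeping around $\P_{N}(v_{\bm r})$ — pulling constants out by linearity, shuffling the projection between the two factors of each integrand, and keeping the vectorization/transpose conventions of ${\rm vec}(\bm c)$ consistent — while the rest is the same quadratic minimization as in the general case.
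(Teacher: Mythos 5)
Your proposal is correct and follows essentially the same route as the paper: compute the first variation of the convex quadratic cost over the linear space $\M_{d\times d}(\mathbb{R})$, use the symmetry and idempotency of $\P_N(v_{\bm r})$ to drop one projection from each integrand, set the variation to zero, and rewrite the resulting scalar system in vectorized matrix form (your perturbation by an arbitrary $\bm E$ is just the coordinate-free version of the paper's entrywise derivatives $\partial C/\partial\dot{\Gamma}_{ij}$). The added remarks on the ${\rm vec}$ identity and the positive semidefiniteness of $\bm A$ are correct refinements but do not change the argument.
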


\begin{proof}
First note that the cost function $C\left(\dot{\bm \Gamma} \bm y\right)$ is convex in $\dot{\bm \Gamma}$ and the 
search space $M_{d\times d}(\mathbb{R})$ is linear. 
Hence, any critical point of the cost 
function is necessarily a global minimum. 
To find such a critical point we 
calculate the derivative of $C(\dot{\bm \Gamma}\bm y)$ with 
respect to each entry of the matrix $\dot{\bm \Gamma}$ 
directly 
\begin{equation}
\begin{aligned}
\frac{\partial C(\dot{\bm \Gamma} \bm y)}{\partial \dot{\Gamma}_{ij}} 
&= 2 \int_{\Omega_y(t)} 
\P_{N}(v_{\bm r}) \left[ G_{\bm \Gamma}(v_{\bm r})  - (\dot{\bm \Gamma} \bm y) \cdot \nabla v_{\bm r}\right]
\P_{N}(v_{\bm r}) \left[ -y_j \frac{\partial v_{\bm r}}{\partial y_i} \right]
d \bm y \\
&= 2 \int_{\Omega_y(t)} 
\P_{N}(v_{\bm r}) \left[ G_{\bm \Gamma}(v_{\bm r})  - (\dot{\bm \Gamma} \bm y) \cdot \nabla v_{\bm r} \right] \left( -y_j \frac{\partial v_{\bm r}}{\partial y_i} \right)
d \bm y,
\end{aligned}
\end{equation}
for all $i,j=1,2,\ldots,d$, 
where in the second line we used the fact 
that the orthogonal projection 
$\P_N(v_{\bm r})$ is symmetric with respect to 
the inner product in $L^2(\Omega_y(t))$ and idempotent. 
The critical points are then obtained by 
setting $\partial C(\dot{\bm \Gamma})/\partial \dot{\Gamma}_{ij} = 0$ 
for all $i,j = 1,2,\ldots,d$, 
resulting in a linear system of 
equations for the entries of the infinitesimal 
linear coordinate flow generator $\dot{\bm \Gamma}$ 
\begin{equation}
\label{TDVP_linear_system_scalar}
\sum_{k,p=1}^d \dot{\Gamma}_{kp} 
\int_{\Omega_y(t)} \P_{N}(v_{\bm r}) \left[ y_p \frac{\partial v_{\bm r}}{\partial y_k} \right]
y_j \frac{\partial v_{\bm r}}{\partial y_i}
d \bm y = 
\int_{\Omega_y(t)}
\P_{N}(v_{\bm r})\left[ G_{\bm y}(v_{\bm r}) \right] y_j \frac{\partial v_{\bm r}}{\partial y_i} d \bm y, 
\end{equation}
for $i = 1,2,\ldots, d$. 
We obtain \eqref{TDVP_linear_system} 
by writing the linear system 
\eqref{TDVP_linear_system_scalar} 
in matrix notation. 

\end{proof}

\vs
\noindent
The linear system of equations 
\eqref{TDVP_linear_system_scalar} can also be obtained 
directly from the Euler Lagrange equations \eqref{E-L-scalar} 
by substituting $\dot{\bm y} = \dot{\bm \Gamma} \bm y$ and 
projecting onto $y_j$ for $j = 1,2,\ldots,d$. 

A few remarks are in order regarding the linear system 
\eqref{TDVP_linear_system}. 
The matrix $\bm A$ is a $d^2 \times d^2$ 
symmetric matrix which is determined by $(d^4 + d^2)/2$ 
entries and the vector $\bm b$ has length $d^2$. 
Computing each entry of the matrix $\bm A$ and the vector 
$\bm b$ requires evaluating a $d$-dimensional integral. 
Therefore computing $\bm A$ and $\bm b$ to set up the linear 
system at each time $t$ requires evaluating a total of 
$(d^4 + 3d^2)/2$ $d$-dimensional integrals. 
Since $v_{\bm r}$ is a low-rank tensor these 
high-dimensional integrals can computed by applying 
one-dimensional quadrature rules to the tensor modes. 
By orthogonalizing the low-rank tensor expansion (e.g., tensor 
train) it is possible to reduce the number one-dimensional 
integrals needed to compute $\bm A$. 
If the matrix $\bm A$ is singular then the solution to 
the linear system of equations is not unique. 
In this case any solution will suffice for generating a coordinate flow. 

{ 
For linear coordinate transformations it may be advantageous to consider the related functional 
\begin{equation}
\label{functional_2}
\begin{aligned}
    C_2(\dot{\bm \Gamma} \bm y) 
    &= \left\|\frac{\partial v_{\bm r}}{\partial t}\right\|_{\mathcal{H}(\Omega_y(t))}^2\\ 
    &= \left\| Q_{\bm y}(v_{\bm r})\right\|_{\mathcal{H}(\Omega_y(t))}^2 \\
    &= \left\| \P_T(v_{\bm r})Q_{\bm y}(v_{\bm r},\dot{\bm y})  \right\|_{\mathcal{H}(\Omega_y(t))}^2
    +
    \left\| \P_N(v_{\bm r}) Q_{\bm y}(v_{\bm r},\dot{\bm y}) \right\|_{\mathcal{H}(\Omega_y(t))}^2.\\
\end{aligned}
\end{equation}
The linear coordinate flow minimizing \eqref{functional_2}
aims at ``undoing'' the action of the operator $G_{\bm y}$ in coordinates $\bm y$. 
More precisely the PDE solution $u(\bm x,t)$ as seen in coordinate $\bm y(\bm x,t)=\bm \Gamma(t)\bm x$, i.e., $v_{r}(\bm \Gamma(t)\bm x,t) \approx u(\bm x,t)$, 
varies in time as little as possible. 
In the third equality of \eqref{functional_2} 
we used the fact that $\P_T(v_{\bm r})$ 
and $\P_N(v_{\bm r})$ are orthogonal projections. 
Following similar steps used to prove 
Proposition \ref{prop:TDVP_linear} it is straightforward to 
show that the critical points $\bm \Sigma(t)$ 
of \eqref{functional_2} satisfy the 
linear system \eqref{TDVP_linear_system} with 
\begin{equation}
\bm A = \int_{\Omega_y(t)}  {\rm vec}(\bm c(\bm y)) {\rm vec}(\bm c(\bm y))^{\top} d \bm y, \qquad 
\bm b = \int_{\Omega_y(t)}  G_{\bm y}(v_{\bm r}(\bm y,t))
{\rm vec}(\bm c(\bm y)) d \bm y. 
\end{equation}
%
% especially when the initial condition $u_0(\bm y)$ 
% of the PDE \eqref{total_derivative_PDE} is 
% low rank. 
%
}
With the optimal coordinate flow generator $\bm \Sigma(t)$ available 
we can compute the matrix $\bm \Gamma(t)$ 
appearing in \eqref{linear_coordinate_flow} by solving 
the matrix differential equation 
\begin{equation}
\label{coordinate_flow_MDE}
\begin{cases}
\displaystyle \frac{d \bm \Gamma(t)}{d t} 
= \bm \Sigma(t) \bm \Gamma(t), \\
\bm \Gamma(0) = \bm I_{d \times d}. 
\end{cases}
\end{equation}
Correspondingly, the PDE \eqref{coordinate_flow_PDE2} can be written along the optimal rank-reducing linear coordinate flow \eqref{linear_coordinate_flow} 
as %
\begin{equation}
\label{coordinate_flow_PDE_linear}
\begin{cases}
\displaystyle \frac{\partial v_{\bm r}(\bm \Gamma(t)\bm x,t)}{\partial t} = 
G_{\bm y}(v_{\bm r}(\bm y,t),\bm y) 
- \left(\bm \Sigma(t)\bm y(\bm x,t)\right)  \cdot \nabla_{\bm y} v_{\bm r}(\bm y(\bm x,t),t), \\
v_{\bm r}(\bm x,0) = \mathfrak{T}_{\bm r}(u_0(\bm x)). 
\end{cases}
\end{equation}
Using the linear system \eqref{TDVP_linear_system}, the matrix differential equation \eqref{coordinate_flow_MDE}, and the 
PDE \eqref{coordinate_flow_PDE_linear} we can specialize 
the time integration scheme described in Section 
\ref{sec:TDVP} 
%and summarized in Figure \ref{fig:one_time_step} 
for nonlinear coordinate flows to linear coordinate flows. 
In Algorithm \ref{alg:TDVP} we provide pseudo-code for 
PDE time stepping with linear coordinate flows. 
In the Algorithm ${\rm TDVP}$ denotes 
a subroutine that solves the optimization problem 
\eqref{TDVP_linear} (e.g., by solving the symmetric 
$d^2 \times d^2$ linear 
system of equations \eqref{TDVP_linear_system}) and 
${\rm LMM}$ denotes any time stepping scheme. 

%\subsubsection{Representing high-dimensional coordinate maps with tensors}
%Let us represent the coordinate flow \eqref{coordinate_flow} 
%as a vector of FTT tensors, i.e., 
%\begin{equation}
%y_i = \bm \Phi_1^{(i)}(x_1) \bm \Phi_2^{(i)}(x_2) \cdots \bm \Phi_d^{(i)}(x_d), \qquad i = 1,2,\ldots,d,
%\end{equation}
%and formulate a time-dependent variational principle for the nonlinear 
%coordinate flow on the tangent space of the FTT tensor manifold. 

%
%
\begin{algorithm}[!t]
\SetAlgoLined
 \caption{PDE integrator with rank-reducing linear coordinate flow.}
\label{alg:TDVP}
\vspace{0.3cm}
 \KwIn{\vspace{0.1cm}\\\\
   $v_0$ $\rightarrow$ initial condition \\
   $\Delta t$ $\rightarrow$ temporal step size \\
   $N_t$ $\rightarrow$ total number of time steps 
 }
 \vspace{0.3cm}
 \KwOut{\vspace{0.1cm}\\
  $\bm \Gamma$ $\rightarrow$ linear coordinate transformation at final time $t_f$ \\
  $v(\bm x, t_f)= u(\bm \Gamma \bm x, t_f)$ $\rightarrow$ Solution at time $t_f$ on transformed coordinate system}
\vspace{0.3cm}
{\bf Runtime:} 
\begin{itemize}
\item [] $\bm \Gamma = \bm I_{d \times d}$
\item [] {\bf for $k = 0$ to $N_t$}
\begin{itemize}
 \item [] $\dot{\bm\Gamma}_k = {\rm TDVP}(v_k, G_{\bm y})$ \\
 \item [] $v_{k+1} = {\rm LMM}(v_{k}, G_{\bm y}, \Delta t) $ \\
 \item [] $\bm \Gamma_{k+1} = 
 		  {\rm LMM}(\bm \Gamma_{k},\dot{\bm \Gamma}_{k},\Delta t) $ \\
\end{itemize}
  {\bf end}
\end{itemize}
\end{algorithm}

\section{Numerical examples}
\label{sec:numerics}
We demonstrate the proposed coordinate-adaptive 
time integration scheme based on linear coordinate flows 
(Algorithm \ref{alg:TDVP}) on several prototype PDEs
projected on functional tensor train (FTT) 
manifolds $\mathcal{M}_{\bm r}$ \cite{Dektor_dyn_approx}. 
%
%
%We restrict our numerical results to rank-reducing linear coordinate flows.
%
% All simulations were run in 
% Matlab 2022a on a 2021 MacBook Pro 
% with M1 chip and 16GB RAM, 
% %
% spatial derivatives and 
% integrals were approximated 
% with Fourier pseudo-spectral 
% differentiation matrices and quadrature 
% rules \cite{spectral_methods_book},
% and time integration was performed 
% with a two step Adams-Bashforth (AB2) scheme. 

\subsection{Liouville equation} 
Consider the $d$-dimensional Liouville equation 
\begin{equation}
\label{advection_equation}
\begin{cases}
\displaystyle\frac{\partial u(\bm x,t)}{\partial t} = 
\bm f(\bm x) \cdot \nabla u(\bm x,t),\vs  \\
u(\bm x,0) = u_0(\bm x),
\end{cases}
\end{equation}
where $\bm f(\bm x)$ is a divergence-free vector field. 
We set the initial condition to be a product of 
independent Gaussians
\begin{equation}
\label{PDF_IC}
u_0(\bm x) = \frac{1}{m} \exp\left(- \sum_{j=1}^d 
\frac{1}{\beta_{j}} \left(x_j + t_{j} \right)^2 \right) , 
\end{equation}
where $\beta_j > 0$, $t_j\in \mathbb{R}$ and $m$ 
is the normalization factor
\begin{equation}
\nonumber
m = \left\|\exp\left(- \sum_{j=1}^d 
\frac{1}{\beta_{j}} \left(x_j + t_{j} \right)^2 \right)
\right\|_{L^1\left(\mathbb{R}^d\right)}.
\end{equation}
%
%
% We set coefficients 
% %
% \begin{equation}
%     f_1(\bm x) = x_2, \qquad f_2(\bm x) = -x_1, 
% \end{equation}
% %
% in \eqref{advection_equation} and 
% parameters $N_g = 1$, $\beta_{11} = 4$, $\beta_{12} = 1/4$, 
% %
% \begin{equation}
%     \bm R^{(1)} = \begin{bmatrix}
%         \cos(\pi/3) & -\sin(\pi/3) \\
%         \sin(\pi/3) & \cos(\pi/3)
%     \end{bmatrix}, 
% \end{equation}
% %
% and $t_{11} = t_{12} = 0$ 
% in the initial condition \eqref{PDF_IC}. 
%
%

\subsubsection{Two-dimensional simulation results}
We first consider the Liouville on a two-dimensional 
flat torus $\Omega\subset \mathbb{R}^2$ with velocity vector
$\bm f(\bm x)=(f_1(\bm x),f_2(\bm x))$ consisting of 
\begin{equation}
\label{vrtx_vf}
f_1(\bm x) = \frac{\partial \psi}{\partial x_2}, \qquad
f_2(\bm x) = -\frac{\partial \psi}{\partial x_1}
\end{equation}
generated 
via the two-dimensional stream function \cite{Venturi_2012}
\begin{equation}
\psi(x_1,x_2) = \Theta(x_1) \Theta(x_2)
\end{equation}
with 
\begin{equation}
\label{ns_eig_fun}
\Theta(x) = \frac{\cos(\alpha x/L)}{\cos(\alpha/2)} - \frac{\cosh(\alpha x/L)}{\cosh(\alpha/2)},
\end{equation}
$L=30$ and  $\alpha = 4.73$.
This yields a two-dimensional 
measure-preserving (divergence-free) velocity 
vector $\bm f(\bm x)$.
In the initial condition \eqref{PDF_IC} we set 
$\bm \beta = \begin{bmatrix}
1/4 & 2
\end{bmatrix}$ and 
$\bm t = \begin{bmatrix}
3 & 3 
\end{bmatrix}$ 
resulting in a rank-$1$ initial condition.
\begin{figure}[!t]
\centerline{\footnotesize\hspace{-.0cm} (a) 
\hspace{4.6cm}  (b)  \hspace{4.8cm} (c) }
\centering
\includegraphics[scale=0.293]{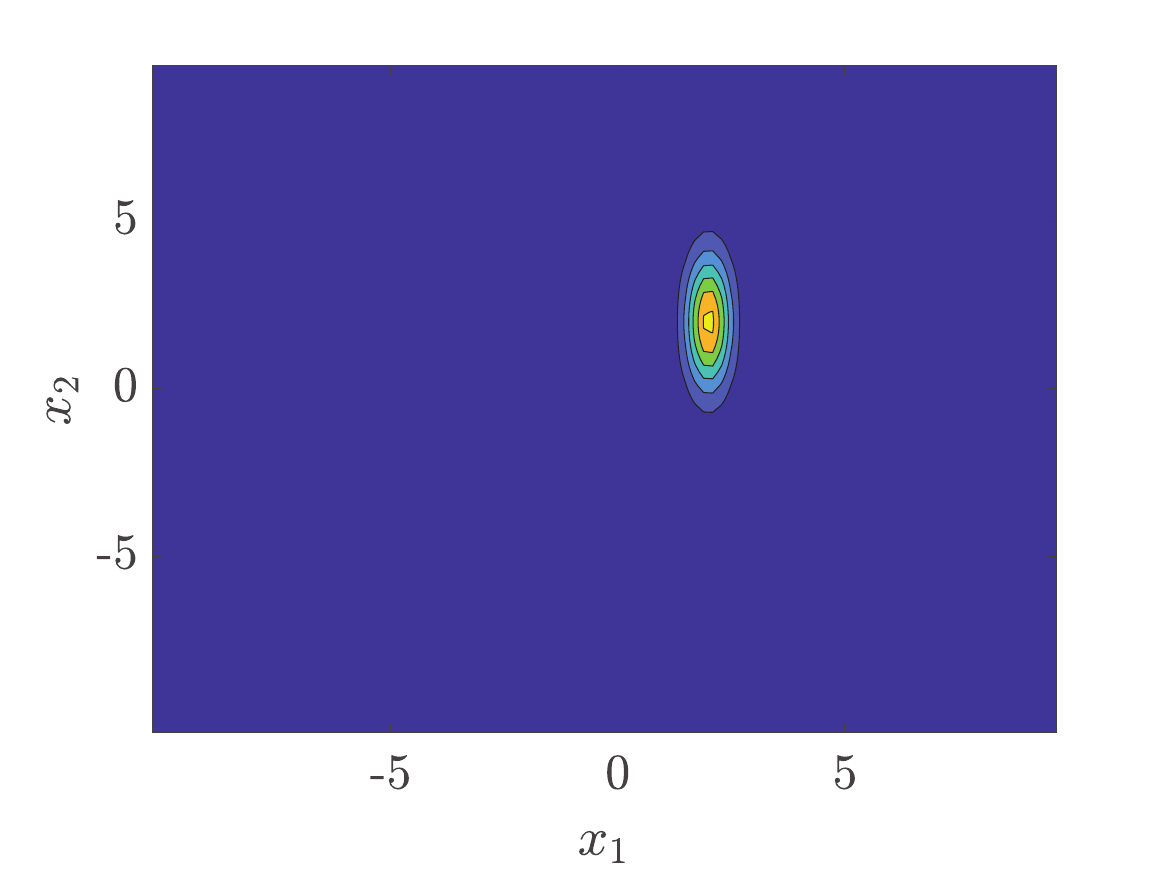} 
\hspace{-.6cm}
\includegraphics[scale=0.293]{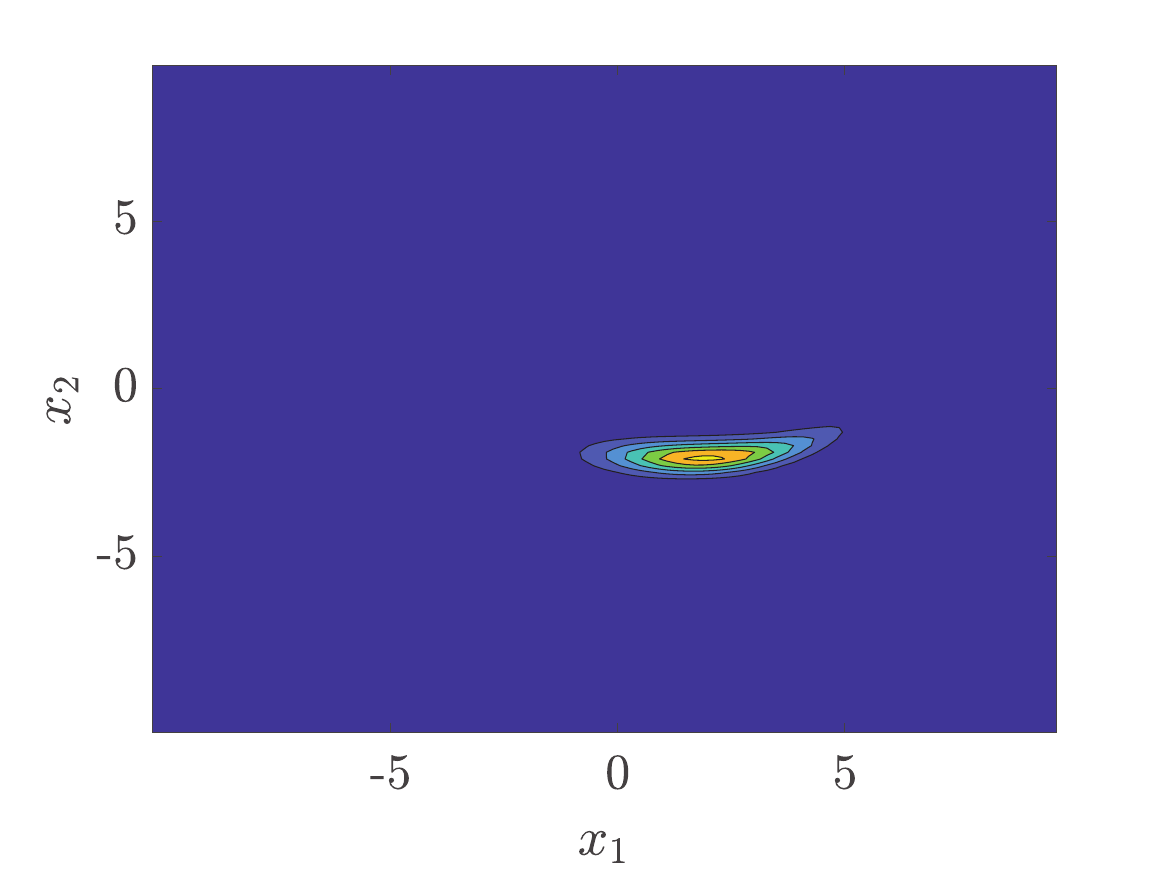} 
\hspace{-.6cm}
\includegraphics[scale=0.293]{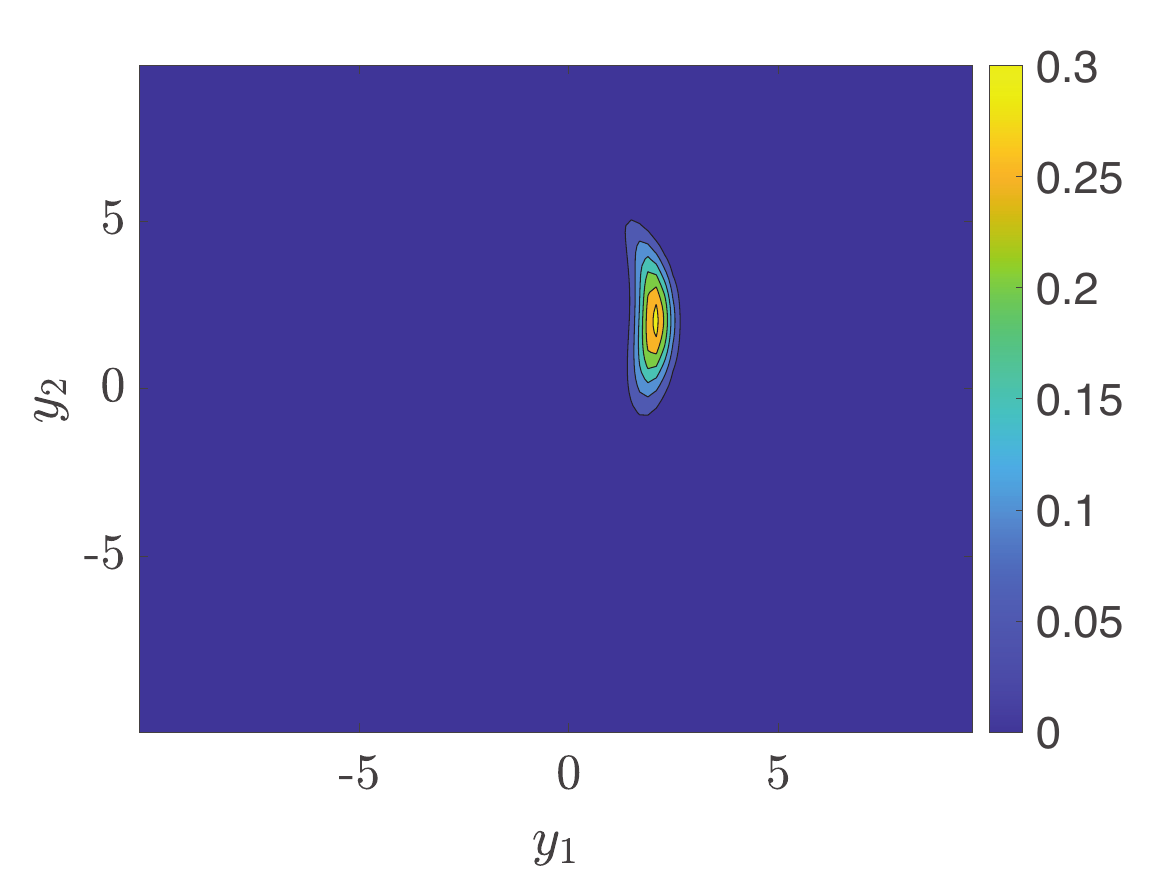} 
\caption{Two-dimensional advection PDE. (a) Initial condition. (b) Solution in Cartesian coordinates at time $t=35$. (c) Solution in time-dependent adaptive coordinates at time $t=35$.}
\label{fig:vortex_PDE_contours}
\end{figure}
Note that this is the same problem we recently studied 
in \cite{Coordinate_flows} using coordinate-adaptive 
tensor integration based on non-convex relaxations of 
the rank minimization problem. 
We discretize the PDE  with the Fourier pseudo-spectral 
method  \cite{spectral_methods_book} using $200$ points in each spatial variable $x_i$ and integrate the PDE 
from $t=0$ to $t=35$ using the two-steps 
Adams-Bashforth scheme (AB2) with time step 
size $\Delta t = 10^{-3}$. 
To demonstrate rank reduction with coordinate flows, we 
run three distinct simulations. In the 
first simulation we integrate the PDE on 
a full two-dimensional tensor product grid 
in Cartesian coordinates for all time 
resulting in our benchmark solution. 
In the second simulation we construct a low-rank 
representation of the solution in fixed Cartesian coordinates 
with the rank-adaptive step-truncation time integration scheme \cite{rodgers2020step-truncation} built 
upon the AB2 integrator using relative tensor 
truncation tolerance $\delta = 10^{-8}$. 
The third simulation demonstrates the low-rank 
tensor format in adaptive coordinates generated 
by Algorithm \ref{alg:TDVP} (using cost 
function \ref{functional_2}) with a 
step-truncation time integration scheme built 
upon the AB2 integrator using relative 
tensor truncation tolerance $\delta = 10^{-8}$.

In Figure \ref{fig:vortex_PDE_contours} we plot 
the low-rank solution at time $t=0$ (a), $t=35$ 
computed in fixed Cartesian coordinates (b) 
and $t=35$ computed in adaptive coordinates (c). 
We notice that the time-dependent coordinate 
system in the adaptive simulation captures 
the affine components (i.e., translation, 
rotation, and stretching) 
of the dynamics generated by the PDE operator. 
In Figure \ref{fig:vortex_PDE_ranks_normals}
(d) we plot the rank versus time of the solution 
computed in fixed Cartesian coordinates 
and the solution computed in adaptive 
time-dependent coordinates. 
We also plot the norm of the normal components, 
i.e., the left hand side of \eqref{normal_bound}, 
versus time in Cartesian coordinates and in adaptive coordinates. 
Since the linear coordinate flow is minimizing the normal 
component of the PDE operator we observe that 
the red dashed line increases at a slower rate 
than the blue dashed line. 
Once the norm of the normal component reaches a threshold 
depending on the relative tensor truncation tolerance $\delta$ set on the solution and time step size $\Delta t$, the rank-adaptive criterion discussed \cite{adaptive_rank} is triggered which results in an increase of the solution rank. 
At the subsequent time step the normal component 
is reduced due to the addition of a tensor mode. 
%
% We observe that the coordinate adaptive 
% integrator \ref{alg:TDVP} mitigates 
% rank increase due to affine effects of the temporal dynamics. 
%
\begin{figure}[!t]
\centerline{\footnotesize\hspace{.5cm} (a) 
\hspace{7.8cm}  (b)   }
\centering
\includegraphics[scale=0.4]{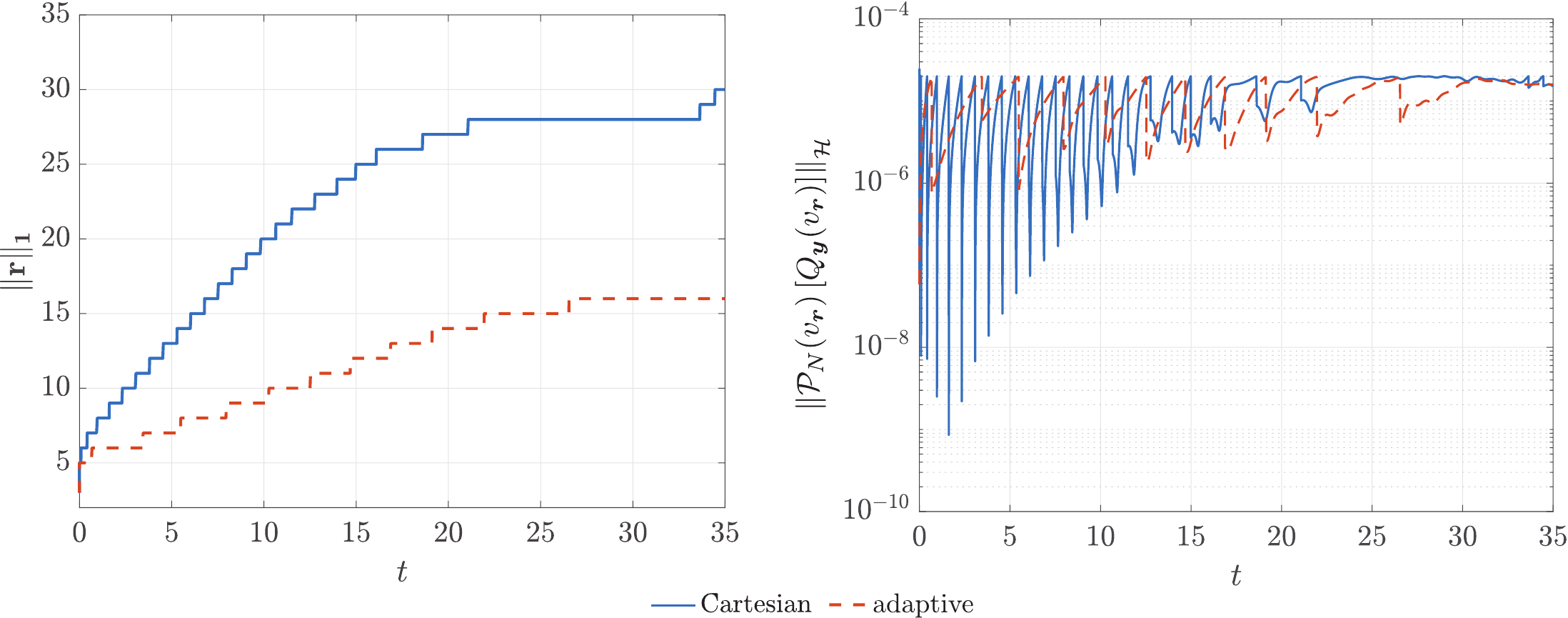} 
\caption{Two-dimensional advection PDE. (a) Rank versus time and (b) norm of the normal component 
of PDE dynamics versus time for the 
low-rank solution in Cartesian coordinates (blue solid line) and in adaptive coordiantes (red dashed line).}
\label{fig:vortex_PDE_ranks_normals}
\end{figure}
At time $t = 35$ we check the $L^{\infty}$ 
error of the low-rank tensor solution computed 
in Cartesian coordinates and the low-rank 
solution computed using adaptive coordinates 
(for the adaptive solution we map it back to Cartesian 
using a two-dimensional interpolant) and 
find that both errors are bounded by $1.5 \times 10^{-5}$.

\subsubsection{Three-dimensional simulation results}
\label{sec:3D_advection_results}
Next we consider the Liouville equation  
\eqref{advection_equation} on a three-dimensional 
flat torus $\Omega\subset \mathbb{R}^3$ with velocity vector 
\begin{equation}
\label{drift_3D}
\bm f(\bm x) = 
\begin{bmatrix}
x_2 \\
-x_1 \\
x_2 \\
\end{bmatrix}.
\end{equation}
This allows us to test our 
coordinate-adaptive algorithm for 
a problem in which we know the optimal ridge matrix. 
We set the parameters of the initial condition \eqref{PDF_IC} as 
$\bm \beta = \begin{bmatrix}
4 & 1/4 & 4
\end{bmatrix}$ and 
$\bm t = \begin{bmatrix}
1 & -1 & 1 
\end{bmatrix}$
resulting in the FTT rank 
$\bm r(0)=\begin{bmatrix}
1 & 1 & 1 & 1
\end{bmatrix}$. 
Due to the choice of coefficients 
\eqref{drift_3D}, the 
analytical solution to the PDE 
\eqref{advection_equation} 
can be written as a ridge 
function in terms of the PDE 
initial condition 
\begin{equation}
\label{3D_tensor_ridge_soln}
u(\bm x,t) = u_0\left(e^{t\bm B} \bm x \right), 
\end{equation}
where
\begin{equation}
\bm B = 
\begin{bmatrix}
0 & 1 & 0 \\
-1 & 0 & 0 \\
0 & 1 & 0
\end{bmatrix}.
\end{equation}
Thus \eqref{3D_tensor_ridge_soln} is a tensor 
ridge solution to the 3D Lioville equation 
\eqref{advection_equation} with the same 
rank as the initial condition, i.e., 
in this case there exists a tensor ridge solution 
at each time with FTT rank equal to 
$\begin{bmatrix}
1 & 1 & 1 & 1
\end{bmatrix}$. 

To demonstrate the performance of the rank-reducing coordinate 
flow algorithm, we ran two low-rank simulations of the PDE \eqref{advection_equation} up to $t = 1$ 
and compared the results with the analytic solution \eqref{3D_tensor_ridge_soln}. 
The first simulation is computed using the 
rank-adaptive step-truncation methods discussed in
\cite{adaptive_rank,rodgers2020step-truncation} with 
relative truncation accuracy $\delta = 10^{-5}$, Cartesian coordinates, and time step size $\Delta t = 10^{-3}$. 
The second simulation demonstrates the performance of the 
rank-reducing coordinate flow (Algorithm \ref{alg:TDVP}) and is computed with the same step-truncation scheme as the simulation in Cartesian coordinates. 
For this example the coordinate flow is constructed by minimizing 
the functional \eqref{functional_2}. 

In Figure \ref{fig:3D_PDE_rank_and_error}(a) 
we plot the $1$-norm of the 
FTT solution rank versus time. 
We observe that the FTT solution rank in Cartesian 
coordinates grows rapidly while the FTT-ridge 
solution generated by the proposed Algorithm 
\ref{alg:TDVP} recovers the optimal time-dependent low-rank tensor ridge solution, which remains constant rank for all time at $\left\|\bm r(t)\right\|_{1} = 4$. 
We also map the FTT ridge solution back to Cartesian coordinates using a three-dimensional trigonometric 
interpolant every $100$ time steps and compare both 
low-rank FTT solutions to the benchmark solution. 
In Figure \ref{fig:3D_PDE_rank_and_error}(b) we plot 
the $L^{\infty}$ error versus time. 
The solution computed in adaptive coordinates does not change in time and therefore is about one order of magnitude more accurate than the solution computed in Cartesian coordinates. 
In this case the proposed algorithm outperforms 
our previous algorithm proposed in \cite{Coordinate_flows} which does not recover 
the optimal linear coordinate transformation 
due to the non-convexity of the cost function used 
to generate the time-dependent coordinate system. 
\begin{figure}[!t]
\centerline{\footnotesize\hspace{.5cm} (a) 
\hspace{7.3cm}  (b)   }
\centering
\includegraphics[scale=0.4]{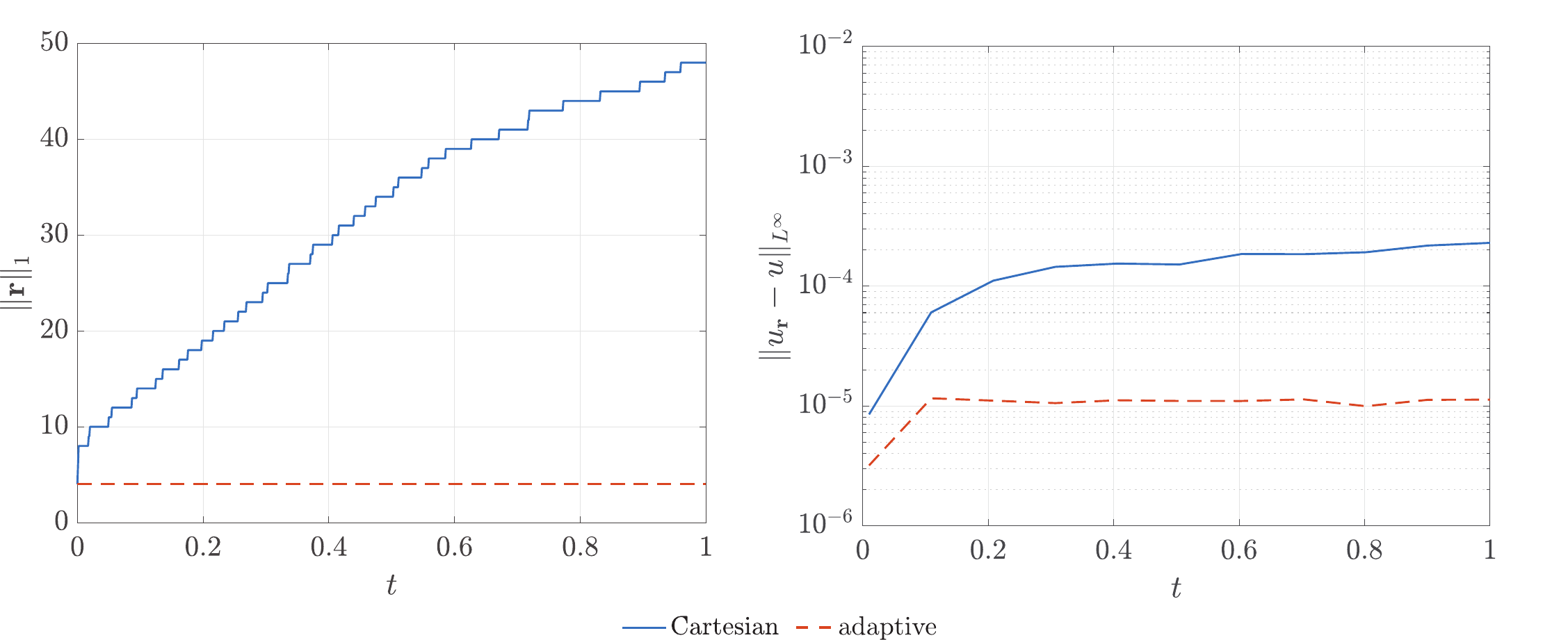} 
\caption{Three-dimensional advection equation \eqref{advection_equation}. (a) Rank versus time and (b) $L^{\infty}$ error of the 
low-rank solutions relative to the 
benchmark solution.}
\label{fig:3D_PDE_rank_and_error}
\end{figure}
For this example the PDE operator has separation rank (CP-rank) $3$ in Cartesian coordinates and CP-rank $9$ in a general linearly transformed coordinate system. 
%
% In this case the rank reduction achieved in the PDE solution outweighs the rank increase of the PDE operator and allows us to compute the solution with significantly less computational cost. 
%
% The CPU-time for integrating the solution from time $t=0$ to time $t=1$ is 116 seconds in the case of FTT step-truncation in Cartesian coordinates and 69 seconds for FTT step-truncation using the proposed adaptive coordinate method. 

\subsection{Fokker-Planck equation}
Finally we consider the Fokker-Planck equation 
\begin{equation}
\label{fp-equation}
\begin{cases}
\displaystyle\frac{\partial u(\bm x,t)}{\partial t} = 
- \nabla \cdot \left( \bm f(\bm x) u(\bm x,t)\right)  + 
\sum_{i,j=1}^d \frac{\partial^2}{\partial x_i \partial x_j} \left( D_{ij}(\bm x) u(\bm x,t)\right),\vs  \\
u(\bm x,0) = u_0(\bm x),
\end{cases}
\end{equation}
in three spatial dimensions ($d=3$) on a flat torus 
$\Omega\subseteq \mathbb{R}^3$. 
We set the drift vector $\bm f$ as in \eqref{drift_3D}, 
and diffusion tensor 
\begin{equation}
    \bm D = \sigma \begin{bmatrix}
        \exp\left(-x_2^2\right) & 0 & 0 \\
        0 & \exp\left(-x_3^2\right)& 0 \\
        0 & 0 & \exp\left(-x_1^2\right)
        \end{bmatrix}
\end{equation}
with $\sigma = 1/4$. 
We use the same initial condition as in Section \ref{sec:3D_advection_results}.

Once again, to demonstrate the performance of the rank-reducing coordinate flow algorithm, we ran three simulations of the PDE \eqref{fp-equation} up to $t = 1$. 
The first simulation is computed with a 
three-dimensional Fourier pseudo-spectral 
method on a tensor product grid with 200 points per dimension 
and AB2 time stepping with $\Delta t = 10^{-4}$. 
This yields an accurate solution benchmark.  
The second simulation is computed using the 
rank-adaptive step-truncation methods discussed in
\cite{adaptive_rank,rodgers2020step-truncation} using relative truncation accuracy $\delta = 10^{-5}$, fixed Cartesian coordinates, 
and time step size $\Delta t = 10^{-3}$. 
The third simulation demonstrates the performance of the rank-reducing coordinate flow (Algorithm \ref{alg:TDVP}) and is computed with the same step-truncation scheme as the simulation in Cartesian coordinates. 
For this example the coordinate flow is constructed by minimizing the functional \eqref{functional_2}. 

\begin{figure}[!t]
\centerline{\footnotesize $t=0.1$ \hspace{3.95cm} $t=0.5$
\hspace{3.75cm} $t = 1.0$  }
\centering
\rotatebox{90}{\hspace{.6cm}\footnotesize Cartesian coordinates}
\hspace{0.1cm}
\includegraphics[scale=0.25]{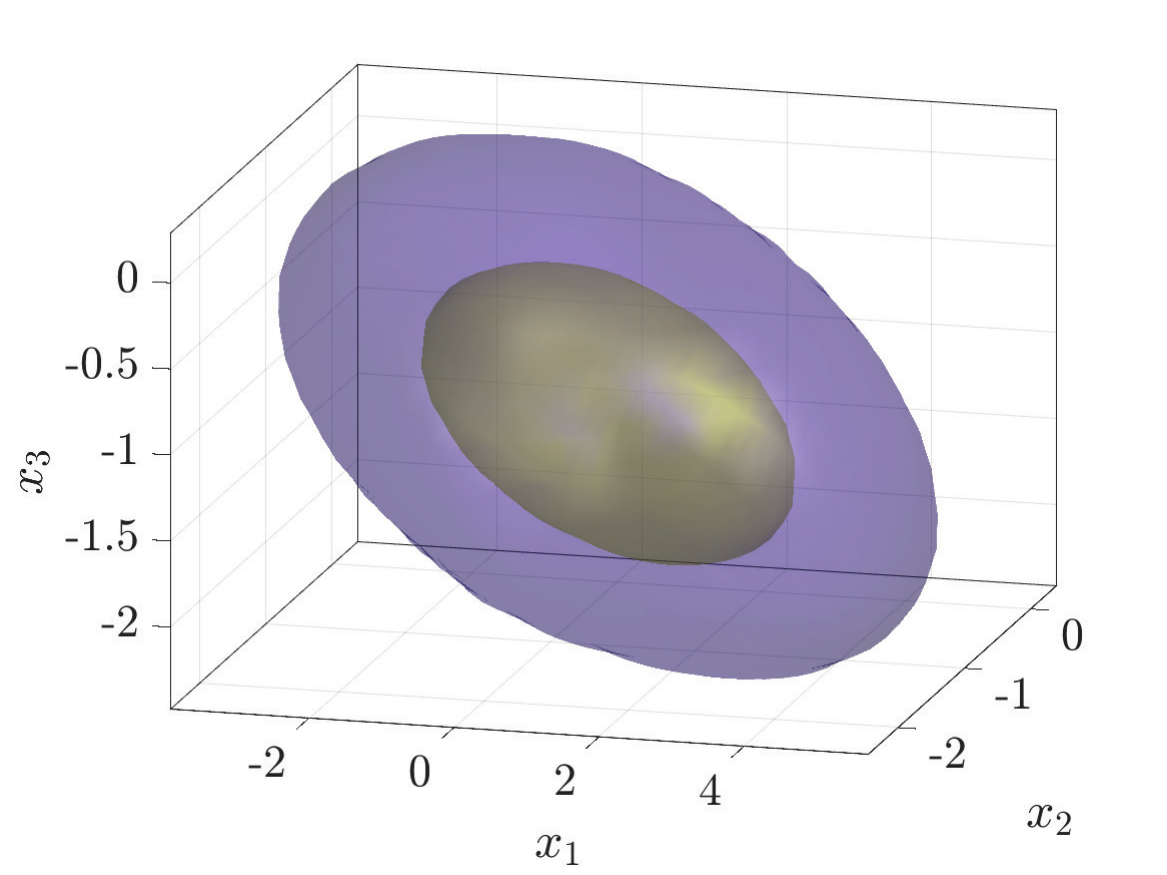} 
\includegraphics[scale=0.25]{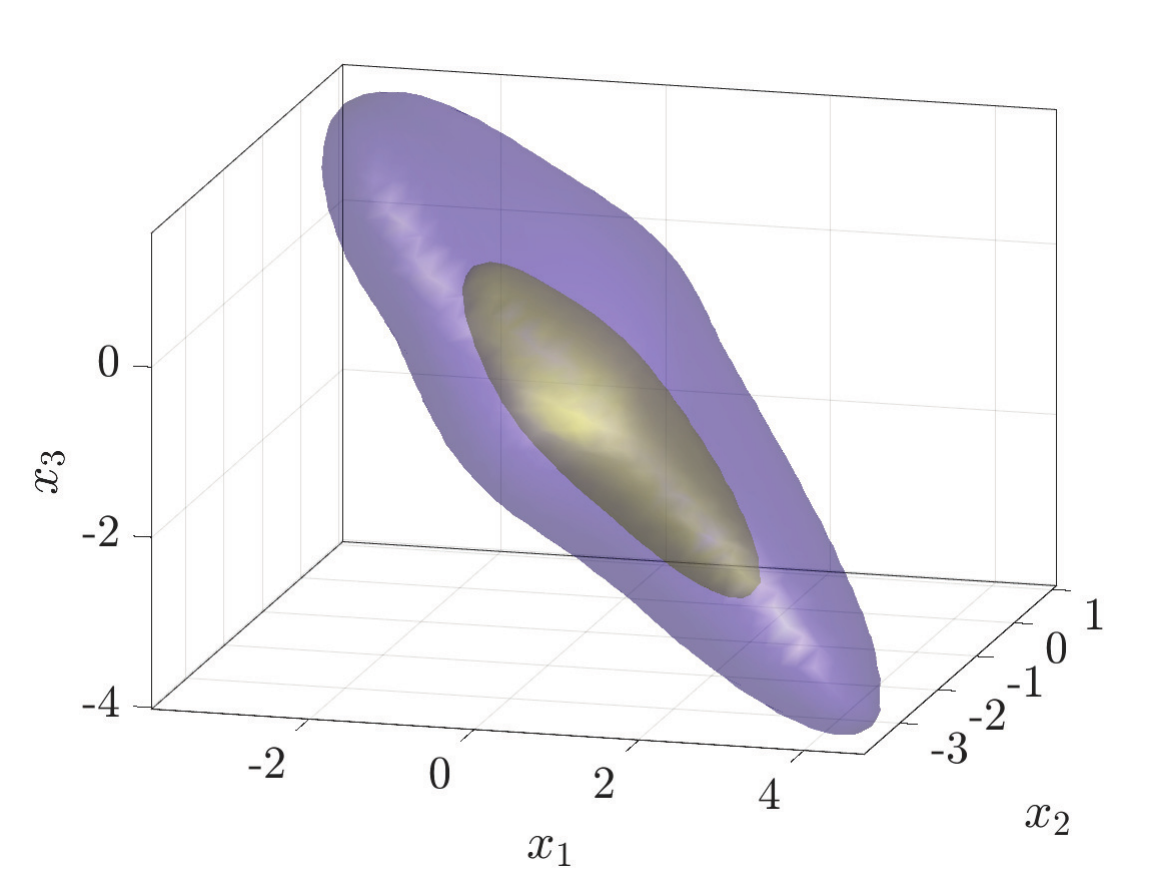} 
\includegraphics[scale=0.25]{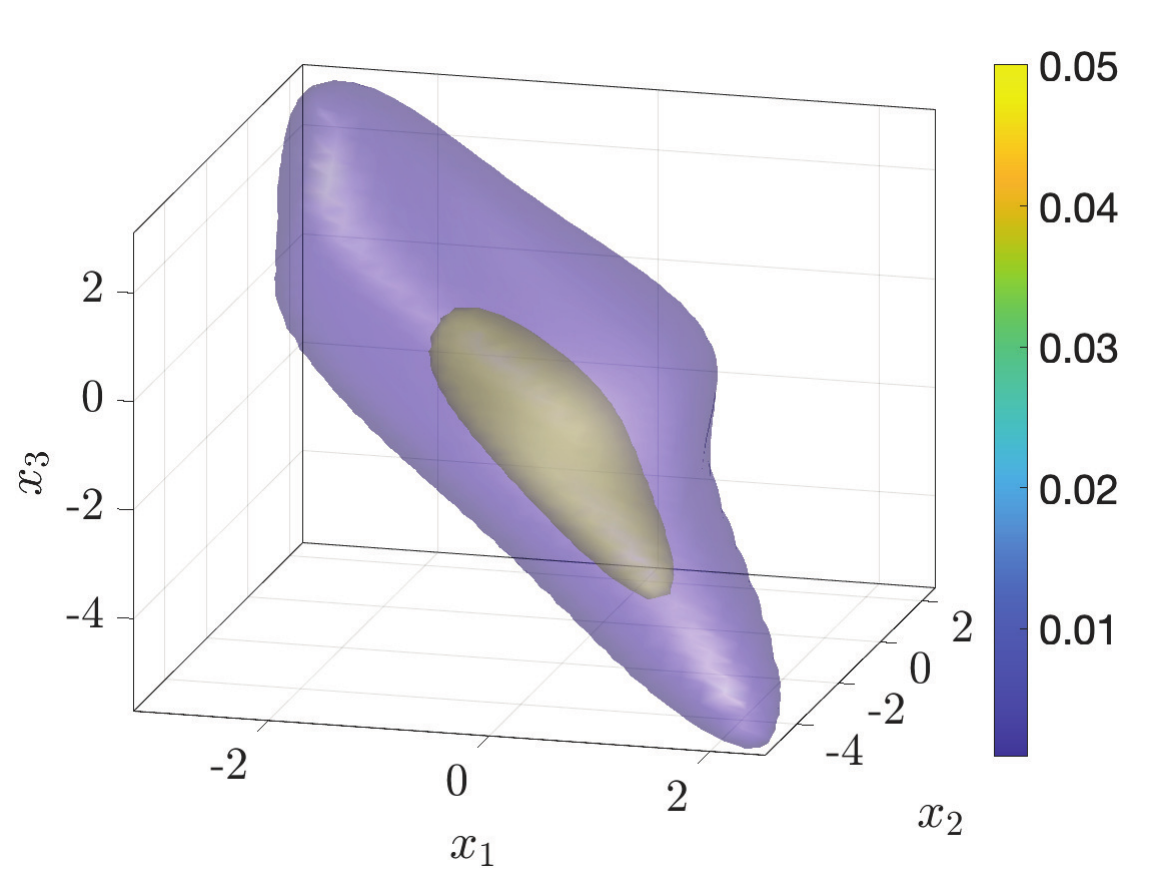} \\
\rotatebox{90}{\hspace{.6cm}\footnotesize adaptive coordinates}
\hspace{0.1cm}
\includegraphics[scale=0.25]{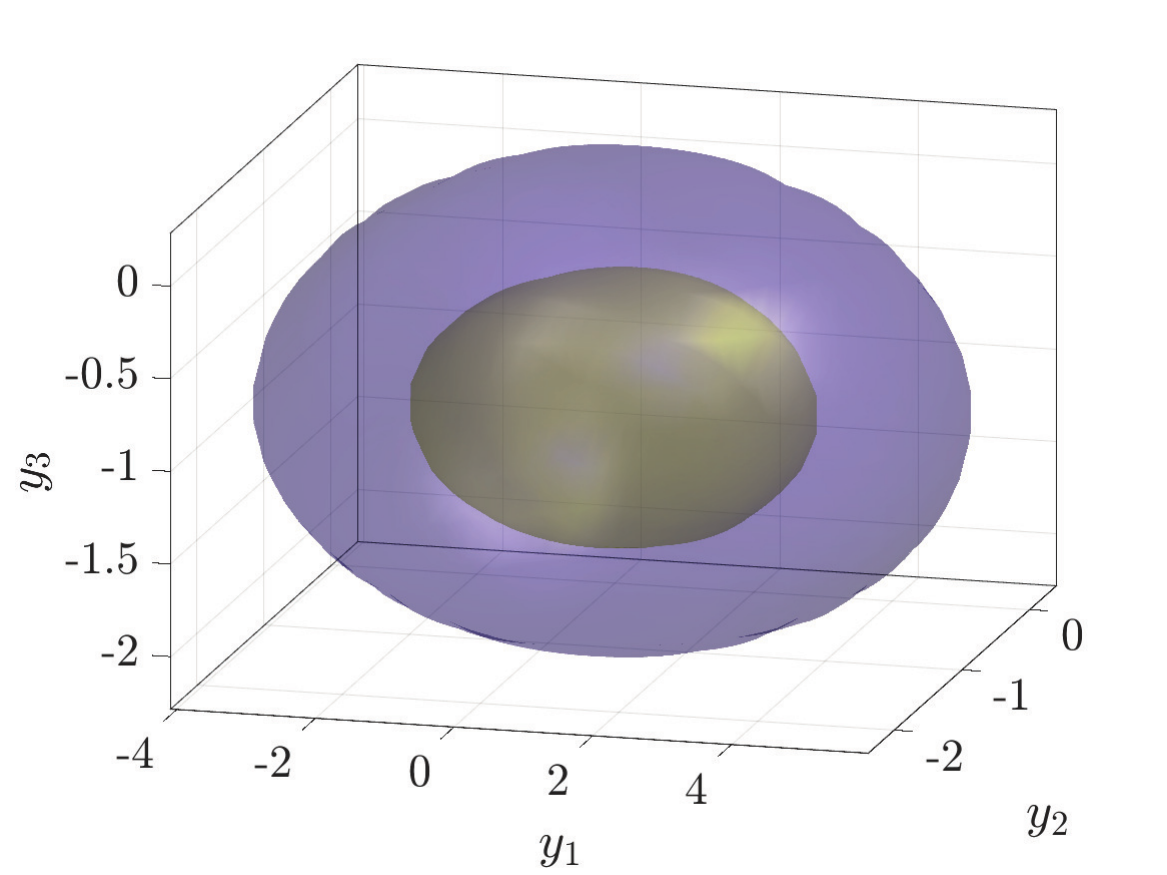} 
\includegraphics[scale=0.25]{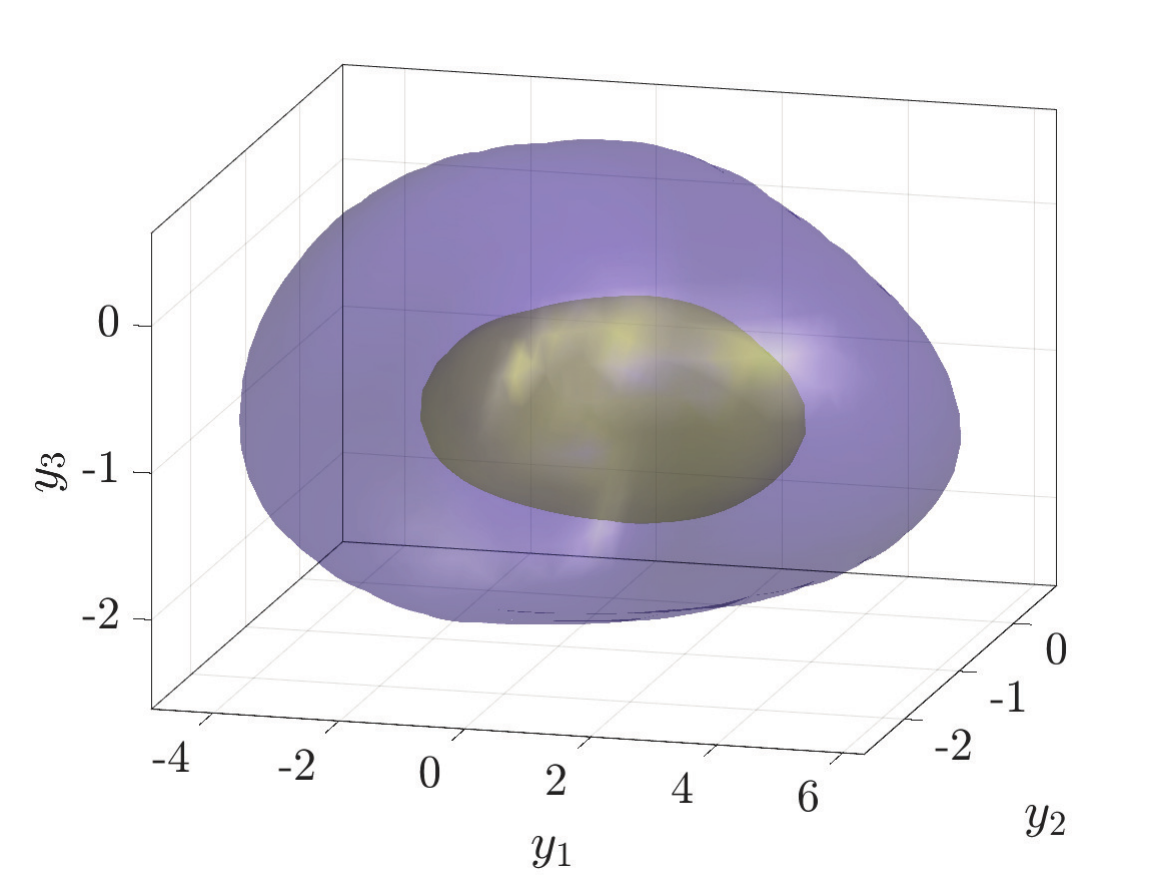} 
\includegraphics[scale=0.25]{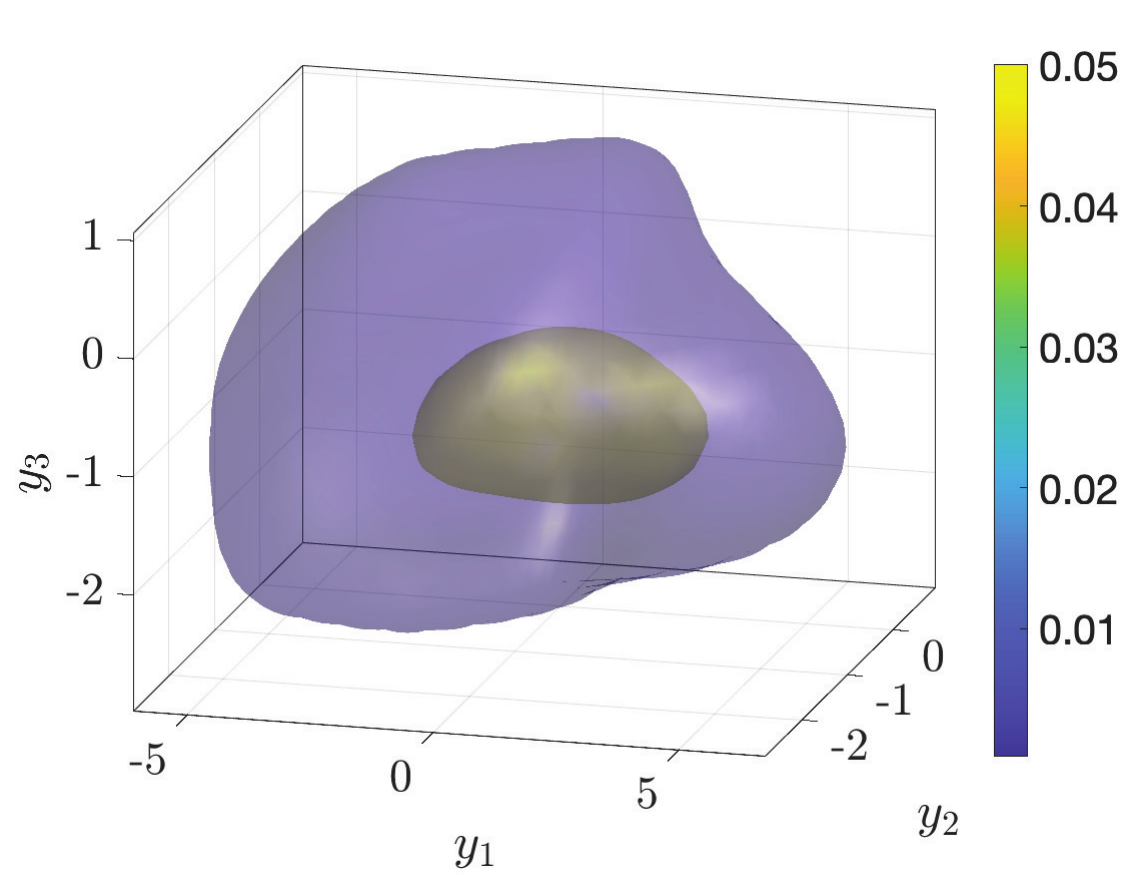} 
\caption{Three-dimensional Fokker-Planck equation \eqref{fp-equation}. 
Time snapshots of the solution at $t=0.1,0.5,1.0$ in Cartesian coordinates (top row) and adaptive coordinates (bottom row). The adaptive coordinate system generated by Algorithm \ref{alg:TDVP} effectively captures the affine effects (i.e., rotation, stretching, translation) of the transformation generated by the Fokker-Planck equation \eqref{fp-equation}.}
\label{fig:fp_snapshots}
\end{figure}
In Figure \ref{fig:fp_snapshots} we plot three 
time snapshots of the low-rank solutions 
to \eqref{fp-equation} in fixed Cartesian coordinates 
(top row) and rank-reducing adaptive coordinates (bottom row). 
We observe that the rank-reducing adaptive coordinate transformation 
effectively captures the affine effects of the transformation generated by the PDE \eqref{fp-equation} even in the presence of diffusion.
In Figure \ref{fig:fp_rank_and_error}(a) 
we plot the $1$-norm of the 
FTT solution rank versus time. 
We observe that the FTT solution rank in Cartesian 
coordinates grows rapidly while the FTT-ridge 
solution generated by Algorithm 
\ref{alg:TDVP} grows significantly slower. 
We also map the FTT ridge solution back to Cartesian coordinates using a three-dimensional trigonometric interpolant every 
$100$ time steps and compare both 
low-rank FTT solutions to the benchmark solution. 
In Figure \ref{fig:fp_rank_and_error}(b) we plot the $L^{\infty}$ error of Cartesian and coordinate-adaptive low-rank solutions relative to the benchmark solution versus time. 
We observe that the process of solving the PDE 
in transformed coordinates and mapping the transformed solution back to Cartesian coordinates incurs negligible error. 
%For this example the advection and diffusion terms in the PDE operator has separation rank (CP-rank) $6$ in Cartesian coordinates and CP-rank $15$ in a general linearly transformed coordinate system. 
%
%
\begin{figure}[!t]
\centerline{\footnotesize\hspace{.5cm} (a) 
\hspace{7.3cm}  (b)   }
\centering
\includegraphics[scale=0.4]{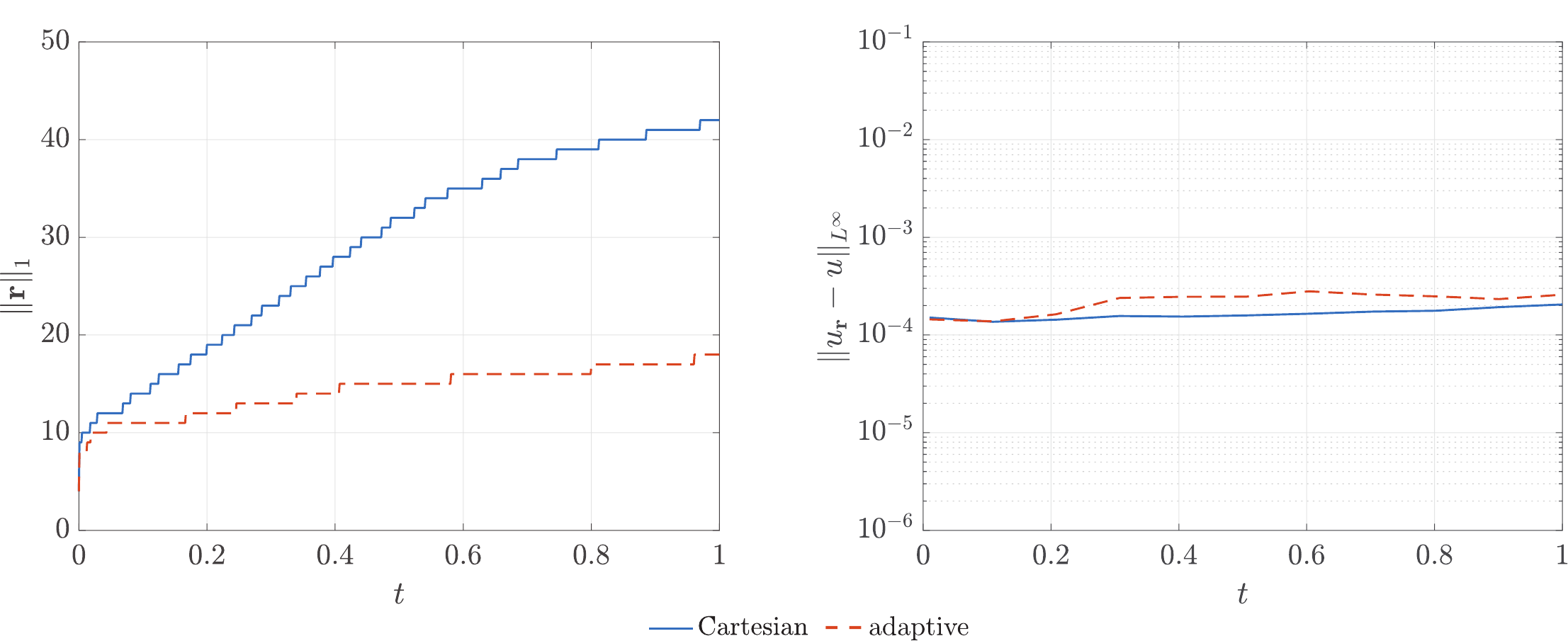} 
\caption{Three-dimensional Fokker-Planck equation \eqref{fp-equation}. 
(a) Rank versus time and (b) 
$L^{\infty}$ error of the 
low-rank solutions relative to the 
benchmark solution. }
\label{fig:fp_rank_and_error}
\end{figure}
%

%----------------------------------------%
\section{Conclusions}
\label{sec:conclusions}
We proposed a new tensor integration method for time-dependent 
PDEs that controls the rank of the PDE solution in time by using  
diffeomorphic coordinate transformations. Such coordinate 
transformations are generated by minimizing the normal 
component of the PDE operator (written in intrinsic coordinates) 
relative to the tensor manifold that approximates the PDE solution. 
This minimization principle is defined by a convex functional which can be computed efficiently and has optimality guarantees. 
The proposed method significantly improves upon and may be used in conjunction with the coordinate-adaptive algorithms we recently proposed 
in \cite{Coordinate_flows}, which are based on non-convex relaxations of the rank minimization
problem and Riemannian optimization.  
We demonstrated the proposed coordinate-adaptive time-integration algorithm for linear coordinate transformations on prototype Liouville and Fokker-Planck equations in two and three dimensions. 
Our numerical results clearly demonstrate 
that linear coordinate flows can capture the 
affine component (i.e., rotation, translation, and stretching) of the transformation generated by PDE operator very effectively. 
In general, one cannot expect linear coordinate flow (or even nonlinear coordinate flows) to fully control the rank of the solution generated by an arbitrary nonlinear PDE.
Yet, the proposed method allows us to solve certain 
classes of PDEs at a computational cost that is 
significantly lower than standard temporal integration 
on tensor manifolds in Cartesian coordinates. 
In general overall computational cost of the proposed method is not only determined by the tensor rank of the solution but also the rank of the PDE operator $G_{\bm y}$, as discussed in 
\cite{Coordinate_flows}. 
Further research is warranted to determine if the 
efficiency of the proposed coordinate-adaptive methodology can be improved by including conditions on the tangent projection of $\dot{\bm y} \cdot \nabla v_{\bm r}$ in \eqref{TDVP}, 
or by simultaneously controlling the PDE solution and operator rank during time integration.

\section*{Acknowledgements}
\noindent
This research was supported by the U.S. Air Force 
Office of Scientific Research grant FA9550-20-1-0174, and by the U.S. Army Research Office 
grant W911NF1810309.

%\newpage
%\section*{References}
\bibliographystyle{plain}
\bibliography{bibliography_file}

\end{document}